\newcommand{\bbC}{\mathbb{C}}
\newcommand{\Dc}{\mathcal{D}}
\newtheorem{theorem}{Theorem}[section] 
\newtheorem{lemma}[theorem]{Lemma}     
\newtheorem{example}[theorem]{Example}
\newtheorem{remark}[theorem]{Remark}
\newtheorem*{rem*}{Remark}
\newtheorem*{que*}{Question}
\newtheorem*{theorem*}{Theorem}
\title{A Model for Planar Compacta and Rational Julia Sets}
\date{\today}
\author[J. Luo]{Jun Luo}
\address{School of Mathematics\\
    Sun Yat-Sen University\\ Guangzhou 510275, China}
\email{luojun3@mail.sysu.edu.cn}
\author[Y. Yang]{Yi Yang}
\address{School of Mathematics(Zhuhai)\\
Sun Yat-sen University\\
Zhuhai 519082, China}
\email{yangy699@mail.sysu.edu.cn\ (corresponding author)}
\author[X.T. Yao]{Xiao-Ting Yao} \address{School of Mathematics and Statistics, Guangdong University of Technology, Guangzhou 510520, China} \email{yaoxiaoting55@gdut.edu.cn}
\begin{document}

\begin{abstract}
A  {\bf Peano compactum}  means a compact metric space having locally connected components such that at most finitely many of them are of diameter greater than any fixed number $C>0$. Given a compactum $K$ in the extended complex plane $\widehat{\bbC}$, it is known that there is a finest upper semi-continuous decomposition of $K$ into subcontinua such that the resulting quotient space is a Peano compactum. We call this decomposition the {\bf core decomposition} of $K$ with Peano quotient and its elements {\bf atoms of $K$}. We show that for any branched covering $f: \widehat{\bbC}\rightarrow\widehat{\bbC}$ and any atom $d$ of $f^{-1}(K)$ the image $f(d)$ is an atom of $K$. Since  rational functions are branched coverings, our result extends earlier ones that are restricted to more limited cases, requiring that $f$ be a polynomial with $f^{-1}(K)=K$.

\textbf{Keywords.} \emph{Core Decomposition, Peano Model, Julia set.}

\textbf{Mathematics Subject Classification 2020: Primary 37B45, Secondary 37F10,54D05.}
\end{abstract}
\maketitle

\tableofcontents
\newpage

\section{Motivations and Results}

By a {\bf compactum}  we mean a compact metric space. To describe specific aspects of the topology of a compactum $K$, we may analyse certain upper semi-continuous (usc) decompositions  of $K$ into subcontinua. We further require that the resulting quotient spaces satisfy a group of properties (P). Such a partition $\Dc$ as well as its quotient space, still denoted as $\mathcal{D}$, becomes more interesting if it refines every other usc decomposition of $K$ into subcontinua whose quotient space satisfies (P). In such a case,  $\Dc$ is called the {\bf core decomposition of $K$} with respect to (P). See for instance \cite[Definition 1.1]{FitzGerald67} for the core decomposition of a continuum with respect to the property of being  {\bf semi-locally connected}.

In the current paper we study a special type of core decompositions,  when (P) means {\bf being a Peano compactum}.  Here a Peano compactum is a compact metric space whose nondegenerate components are locally connected and form a null sequence, in the sense that for any $C>0$ at most finitely many of them are of diameter greater than $C$.
One may see \cite[Theorem 3]{LLY-2019} for a characterization of Peano compacta lying on the plane.

We will denote by  $\mathfrak{M}^{PC}(K)$ the family of all the usc decompositions $\Dc$ of  $K$ into subcontinua such that the quotient space is a Peano compactum. The member of $\mathfrak{M}^{PC}(K)$ that refines all the others, if it exists, is called the  core decomposition of $K$ with respect to the property of being a Peano compactum, or shortly the {\bf core decomposition of $K$ with Peano quotient}.
When it exists, this core decomposition is denoted by $\Dc_K^{PC}$ and its elements, considered as subcontinua of $K$, are called {\bf atoms of $K$}. We will call the resulting quotient space  the {\bf Peano model of $K$} and the natural projection $\pi: K\rightarrow\Dc_K^{PC}$  the {\bf Peano projection of $K$}. If $K$ is in the extended complex plane $\widehat{\bbC}$, we further set $\widehat{\Dc}_K=\Dc_K^{PC}\cup\{\{z\}: z\notin K\}$ and call $\widehat{\pi}:\widehat{\bbC}\rightarrow\widehat{\Dc}_K$ the {\bf extended Peano projection}. To emphasize $K$, we sometimes write $\pi_K$ (respectively, $\widehat{\pi}_K$) instead of $\pi$ (respectively, $\widehat{\pi}$).
The resulting quotient space $\widehat{\Dc}_K$ is a cactoid \cite[p.76]{Whyburn42}. By \cite[p.171, Theorem (2.2)]{Whyburn42} and \cite[p.175, Theorem (3.3)]{Whyburn42}, we know that a continuum $N$ is a cactoid if and only if there is a a monotone map of $\widehat{\bbC}$ onto $N$.

Recall that a compactum $K$ in $\mathbb{R}$ is always a Peano compactum, thus $\Dc_K^{PC}$ consists of all the singletons $\{x\}$ with $x\in K$. By \cite[Example 7.1]{LLY-2019}, there is a compactum in $\mathbb{R}^3$ that does not have a  core decomposition with Peano quotient. On the other hand, by \cite[Theorem 7]{LLY-2019}, all planar compacta $K$  have a core decomposition with Peano quotient.
When $K$ is a continuum, the core decomposition $\Dc_K^{PC}$ may be different from the one proposed by Moore \cite{Moore25-a}, which partitions an arbitrary continuum into prime parts such that the quotient space is a Peano continuum. See \cite[Example 2.3]{Yang-Yao} for a special continuum in the plane, such that the core decomposition strictly refines the decomposition into prime parts.

Let us focus on compacta $K\subset\widehat{\bbC}$.
If $K$ is {\bf unshielded}, so that $K=\partial U$ for some component $U$ of  $\widehat{\bbC}\setminus K$, the core decomposition $\Dc_K^{PC}$ has been obtained by Blokh, Curry, and Oversteegen in \cite{BCO11,BCO13}.
When $K$ is connected  the Peano model  $\Dc_K^{PC}$ is called the {\bf finest locally connected model} in \cite{BCO11}. When $K$ is disconnected the Peano model $\Dc_K^{PC}$ is called the {\bf finest finitely Suslinian monotone model} in \cite{BCO13}. These models are planar compacta that are still unshielded \cite[Theorem 19]{BCO13}. Here a compactum is {\bf finitely Suslinian} provided that every collection of pairwise disjoint subcontinua whose diameters are bounded away from zero is finite.

Since every locally connected or finitely Suslinian compactum $K\subset\widehat{\bbC}$ is necessarily a Peano compactum \cite[Theorems 1 and 3]{LLY-2019}, the Peano model $\Dc_K^{PC}$  obtained in \cite[Theorem 7]{LLY-2019} includes as special sub-cases the two models given in \cite{BCO11,BCO13}.
The Peano model $\Dc_K^{PC}$ is developed in a way that is independent of the so-called Moore's Theorem \cite[Theorem 22]{Moore25}. It provides a negative answer to  \cite[Question 5.2]{Curry10}:
{\em Is there  a rational function whose Julia set does not have a finest locally connected model?} \ It also provides a solution to the first part of \cite[Question 5.4]{Curry10}:
{\em For what useful topological properties $P$ does there exist a finest decomposition of every Julia set $J(R)$ {\rm(of a rational function $R$)} satisfying $P$ ? Is the decomposition dynamic? Which of these is the appropriate analogue for the finest locally connected model?}

We aim to resolve the latter two parts of \cite[Question 5.4]{Curry10}. To do that, we analyze how the atoms of $K$ are related to those of $f^{-1}(K)$  for any {\bf branched covering} $f:\widehat{\bbC}\rightarrow\widehat{\bbC}$. Note that for a branched covering $f$ there is a finite set $B_f$,  often called the {\bf branch set}, such that $f$ is a local homeomorphism at every $x$ in $\widehat{\bbC}\setminus B_f$. Also note that every rational function $f(x)$ is a branched covering, with $B_f=\{x: f'(x)=0\}$.
Our main result is the following.

\begin{theorem}\label{invariance}
Given a compactum $K\subset\widehat{\bbC}$ and a branched covering $f:\widehat{\bbC}\rightarrow\widehat{\bbC}$, every atom of $L=f^{-1}(K)$ is sent {\bf onto} an atom of $K$. In particular, if $f$ is a rational function and $K$ is the Julia set of $f$ then the extended Peano projection $\widehat{\pi}_K$ semi-conjugates $f:\widehat{\bbC}\rightarrow\widehat{\bbC}$ to the induced map $\widehat{f}: \widehat{\Dc}_K\rightarrow\widehat{\Dc}_K$ that sends every $d\in\widehat{\Dc}_K$ to $f(d)\in\widehat{\Dc}_K$.
\end{theorem}

When $K$ is the Julia set of a polynomial $f$ with no irrationally neutral cycle, Kiwi \cite{Kiwi04} discusses a special case of Theorem \ref{invariance}. Note that the atoms of $K$ are the fibers of $K$ \cite[Definition 2.5]{Kiwi04}. Here two points $x\ne y\in K$ are contained in different fibers if and only if there is a finite set $C$ and a separation $K\setminus C=E_x\cup E_y$ with $x\in E_x$ and $y\in E_y$. By \cite[Theorem 3]{Kiwi04}, if $K$ is even connected then each of its atoms is a union of finitely many prime end impressions of the unbounded Fatou component .

When $K$ is the Julia set of a rational function $f$, the induced map $\widehat{f}$ reduced to $\Dc_K^{PC}$ is either trivial (when $\Dc_K^{PC}$ has a single element) or topologically mixing  (when $\Dc_K^{PC}$ has more than one hence uncountably many  elements). See \cite[Theorem 30]{BCO11} and \cite[Theorem 20]{BCO13} for earlier discussions in more restricted situations, where $K$ is assumed to be unshielded and $f$ is required to be a polynomial satisfying $f^{-1}(K)=K$.

In order to prove  Theorem \ref{invariance}, we shall analyze the closure of a symmetric relation $R_K$, called the {\bf Sch\"onflies relation} on $K$. By \cite[Theorems 5 and 6]{LLY-2019},  $\Dc_K^{PC}$ is determined by $R_K$. See below for more terminology and some basic results from the literature.

Given two disjoint Jordan curves $J_1$ and $J_2$, we will denote by $U(J_1,J_2)$ the component of $\widehat{\bbC}\setminus(J_1\cup J_2)$ that is bounded by $J_1\cup J_2$. This is an annulus  in $\widehat{\bbC}$. Two points $x,y\in K$ are {\em related under $R_K$} provided that either $x=y$ or there exist two disjoint Jordan curves $J_1\ni x$ and $J_2\ni y$  that satisfy two requirements: (1) $\overline{U(J_1,J_2)}\cap K$ has an infinite sequence of components $P_n$ intersecting both $J_1$ and $J_2$; (2) the limit $P_\infty=\lim\limits_{n\rightarrow\infty}P_n$ under the Hausdorff distance contains $\{x,y\}$. See Figure \ref{R_K}, in which $U(J_1,J_2)$ is represented as a circular annulus.
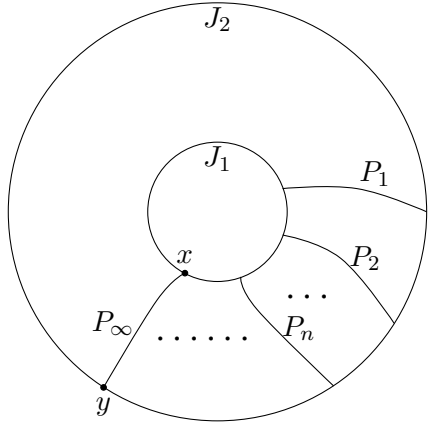
\begin{figure}[ht]
\vspace{-0.382cm}
\begin{center}
\begin{tikzpicture}[scale=0.618]
\draw  (-0.5,3) ellipse (4.5 and 4.5);
\draw  (-0.5,3) ellipse (1.5 and 1.5);
\draw  plot[smooth, tension=.7] coordinates {(0.9,3.5) (2.5,3.5) (4,3)};
\node at (-0.75,0.25) {\Large$\cdots\cdots$};
\node at (1.5,1.15) {\Large$\cdots$};
\draw  plot[smooth, tension=.7] coordinates {(0.9,2.5) (2.15,2) (3.3,0.6)};
\draw  plot[smooth, tension=.7] coordinates {(0,1.6) (0.35,0.9) (2,-0.75)};
\draw  plot[smooth, tension=.7] coordinates {(-1.2,1.68) (-1.8,1.1) (-2.95,-0.78)};

\draw (-1.2,1.68) [fill=black]circle(0.06)node[above]{$x$};
\draw (-2.95,-0.78) [fill=black]circle(0.06)node[below]{$y$};

\node at (-0.5,4.15) {$J_1$};
\node at (-0.5,7.15) {$J_2$};
\node at (2.8893,3.8286) {$P_1$};
\node at (2.6786,2) {$P_2$};
\node at (1.25,0.5) {$P_n$};
\node at (-2.75,0.6) {$P_\infty$};
\end{tikzpicture}
\end{center}\vspace{-0.5cm}
\caption{Relative locations of $P_n$ and $P_\infty$ inside the closed annulus $\overline{U(J_1,J_2)}$.}\label{R_K}
\vspace{-0.382cm}
\end{figure}

Denote the {\bf minimal closed equivalence containing $R_K$} by $\sim$ or $\sim_K$, when it is necessary to emphasize $K$.
We also call $\sim_K$ the {\bf Sch\"onflies equivalence on} $K$ \cite[Definition 4]{LLY-2019}.
Note that by \cite[Proposition 5.1]{LLY-2019} every class $[x]_\sim=\{z\in K: \ z\sim x\}$ is a continuum. Therefore, $\Dc_K=\{[x]_\sim: \ x\in K\}$ is an usc decomposition of $K$ into subcontinua. Moreover,  the core decomposition $\Dc_K^{PC}$ exists and coincides with  $\Dc_K$ \cite[Theorem 5 and 6]{LLY-2019}.

Throughout this paper,  $D_r(x)$ denotes the open round disk around $x$ with radius $r$, under the spherical distance $\rho$ on $\widehat{\bbC}$, and $\overline{R_K}$ denotes the closure of $R_K$, as subsets of the product $K\times K$. Clearly, $\overline{R_K}$ is reflexive and symmetric, although  $\overline{R_K}$ is often properly contained in $\sim_K$. In Example \ref{R_K_and_sim}, we give a continuum $K$ such that the fiber $\overline{R_K}[x]=\left\{y: (x,y)\in\overline{R_K}\right\}$ of $\overline{R_K}$ at any $x\in K$ is a proper subset of $K$ while  every fiber of $\sim_K$ equals $K$. In Examples \ref{indecomposable} to \ref{witch_broom}, we give more compacta $K$ that illustrate how $\sim_K$ may be connected to $\overline{R_K}$ in different ways.
\begin{example}\label{R_K_and_sim}
Let $\mathcal{C}$ be the Cantor's ternary set and $\displaystyle K_0=\left\{t+\frac{s}{2}+\frac{\sqrt{3}}{2}s\mathbf{i}: t\in\mathcal{C}, s\in[0,1]\right\}$. Respectively rotating $K_0$ by $120^o$ and $240^o$ about the point $x_0=1$, we will obtain $K_1$ and $K_2$. Set $K=K_0\cup K_1\cup K_2$. See Figure \ref{R_K_and_sim_Fig}. Then every fiber of  $\sim_K$ coincides with $K$. On the other hand, every fiber $\overline{R_K}[x]$ with $x\ne 1$ is a line segment while $\overline{R_K}[1]$ is a Y-shape.
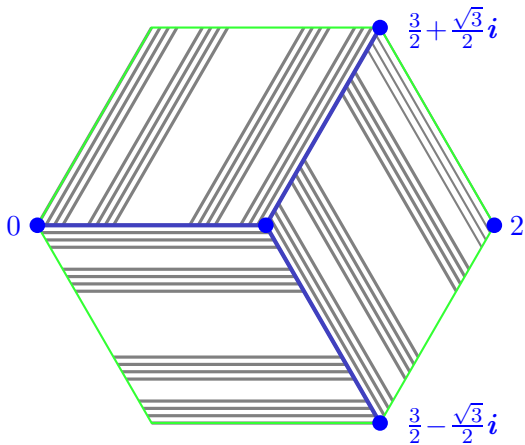
\begin{figure}[ht]
\vspace{-0.382cm}
\begin{tikzpicture}[scale=1.25,x=1cm,y=1cm]
\foreach \i in {0,...,3}
{
\draw[gray, very thick] (1.215+0.09*\i,2.1044) -- (0.09*\i,0);
\draw[gray, very thick] (1.215+0.54+0.09*\i,2.1044) -- (0.54+0.09*\i,0);
\draw[gray, very thick] (1.215+1.62+0.09*\i,2.1044) -- (1.62+0.09*\i,0);
\draw[gray, very thick] (1.215+2.16+0.09*\i,2.1044) -- (2.16+0.09*\i,0);
\draw[gray, very thick] (2.43+0.045*\i,0-0.078*\i) -- (0.045*\i,-0.078*\i);
\draw[gray, very thick] (2.43+0.27+0.045*\i,0-0.468-0.078*\i) -- (0.27+0.045*\i,-0.468-0.078*\i);
\draw[gray, very thick] (2.43+0.81+0.045*\i,0-1.403-0.078*\i) -- (0.81+0.045*\i,-1.403-0.078*\i);
\draw[gray, very thick] (2.43+1.08+0.045*\i,0-1.871-0.078*\i) -- (1.08+0.045*\i,-1.871-0.078*\i);
\draw[gray, very thick] (2.43+0.045*\i,0.078*\i) -- (3.645+0.045*\i,0.078*\i-2.1044);
\draw[gray, very thick] (2.43+0.27+0.045*\i,0.468+0.078*\i) -- (3.645+0.27+0.045*\i,0.468+0.078*\i-2.1044);
\draw[gray, very thick] (2.43+0.81+0.045*\i,1.403+0.078*\i) -- (3.645+0.81+0.045*\i,1.403+0.078*\i-2.1044);
\draw[gray, thick] (2.43+1.08+0.045*\i,1.871+0.078*\i) -- (3.645+1.08+0.045*\i,1.871+0.078*\i-2.1044);
}

\draw[blue!50!gray, ultra thick] (2.43,0) -- (3.645,-2.1044);
\draw[blue!50!gray, ultra thick] (2.43,0) -- (3.645,2.1044);
\draw[blue!50!gray, ultra thick] (0,0) -- (2.43,0);
\draw[green!80, thick] (0,0) -- (1.215,2.1044)--(3.645,2.1044)--(2.43+2.43,0)--(3.645,-2.1044)--(1.215,-2.1044)--(0,0);
\fill[blue] (2.43,0)  circle (0.5ex); \fill[blue] (4.86,0)  circle (0.5ex);
\fill[blue] (0,0)  circle (0.5ex);
\fill[blue] (3.645,2.10438)  circle (0.5ex);
\node[blue] at (4.4,2.10438) {$\frac32\!+\!\frac{\sqrt{3}}{2}\textbf{i}$};
\node[blue] at (4.4,-2.10438) {$\frac32\!-\!\frac{\sqrt{3}}{2}\textbf{i}$};
\node[blue] at (-0.25,0) {$0$}; \node[blue] at (5.1,0) {$2$};
\fill[blue] (3.645,-2.10438)  circle (0.5ex);
\end{tikzpicture}
\vspace{-0.382cm}
\caption{A simple depiction of $K$ and the fiber $\overline{R_K}[x]$ with $x=1$.}\label{R_K_and_sim_Fig}
\end{figure}
\end{example}

We will obtain the following Theorems \ref{closure} to \ref{invariant-fiber} and use them to prove Theorem \ref{invariance}.

\begin{theorem}\label{closure}
Two points $x\ne y\in K$ are related under $\overline{R_K}$ if and only if $K\setminus(D_r(x)\cup D_r(y))$ has infinitely many components intersecting both $\partial D_r(x)$ and $\partial D_r(y)$, for all $r\in(0,\frac12 \rho(x,y))$.
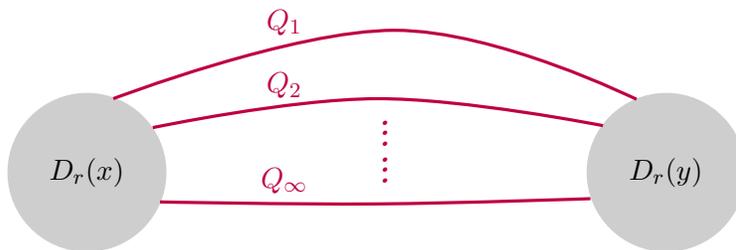
\begin{figure}[ht]
\vskip -0.5cm
\begin{center}
\begin{tikzpicture}[scale=0.7]
\draw[fill,gray!38.2]  (-7.5,1.5) circle (1.5);
\draw[fill,gray!38.2]  (3.5,1.5) circle (1.5);

\draw[very thick, purple]  plot[smooth, tension=.7] coordinates {(-7,2.9) (-1.65,4.2) (2.9375,2.8875)};
\draw[very thick, purple]  plot[smooth, tension=.7] coordinates {(-6.25,2.35) (-2,2.9) (2.3,2.3875)};
\draw[very thick, purple]  plot[smooth, tension=.7] coordinates {(-6.1125,0.9375) (-2,0.9) (2.0625,1)};

\node[very thick, purple] at (-3.75,4.35) {$Q_1$};
\node[very thick, purple] at (-3.75,3.15) {$Q_2$};
\node[very thick, purple] at (-3.75,1.35) {$Q_\infty$};

\node at (-7.5,1.5) {$D_r(x)$};
\node at (3.5,1.5) {$D_r(y)$};

\node[very thick, purple] at (-1.85,2.4) {\LARGE$\vdots$};
\node[very thick, purple] at (-1.85,1.7) {\LARGE$\vdots$};

\end{tikzpicture}
\vskip -0.5cm
\caption{A depiction of the components $Q_i$ of $K\setminus(D_r(x)\cup D_r(y))$ in Theorem \ref{closure}.}
\end{center}
\vskip -0.25cm
\end{figure}
\end{theorem}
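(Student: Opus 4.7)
My plan is to first establish a preparatory topological lemma, then handle each direction.

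\textbf{Preparatory lemma.} If $P$ is a continuum in $\hat{\mathbb{C}}$ and $D_1, D_2$ are disjoint open sets with $\overline{D_1} \cap \overline{D_2} = \emptyset$ and with $P \cap D_i \ne \emptyset$ for $i = 1, 2$, then some component of $P \setminus (D_1 \cup D_2)$ meets both $\partial D_1$ and $\partial D_2$. I would prove this by contradiction: if the conclusion fails, then the coincidence of components with quasi-components in the compact space $P \setminus (D_1 \cup D_2)$ yields a clopen partition $X_1 \sqcup X_2$ of $P \setminus (D_1 \cup D_2)$ separating $P \cap \partial D_1$ from $P \cap \partial D_2$; adjoining $P \cap \overline{D_i}$ to $X_i$ then produces a clopen partition of the continuum $P$, contradicting its connectedness.

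\textbf{``If'' direction.} Pick $r_n \downarrow 0$ with $r_n < \frac{1}{2} \rho(x, y)$. For each $n$, the hypothesis produces infinitely many components of $K \setminus (D_{r_n}(x) \cup D_{r_n}(y))$ meeting both circles; I would extract a Hausdorff-convergent subsequence, whose limit $Q_\infty^{(n)} \subseteq K$ still meets both $\partial D_{r_n}(x)$ and $\partial D_{r_n}(y)$. Choose $u_n \in Q_\infty^{(n)} \cap \partial D_{r_n}(x)$ and $v_n \in Q_\infty^{(n)} \cap \partial D_{r_n}(y)$. Using the circles $\partial D_{r_n}(x) \ni u_n$ and $\partial D_{r_n}(y) \ni v_n$ as the required disjoint Jordan curves, and noting that $\overline{U(\partial D_{r_n}(x), \partial D_{r_n}(y))} \cap K = K \setminus (D_{r_n}(x) \cup D_{r_n}(y))$, the extracted subsequence witnesses $(u_n, v_n) \in R_K$. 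Since $u_n \to x$ and $v_n \to y$, I conclude $(x, y) \in \overline{R_K}$.

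\textbf{``Only if'' direction.} Take $(x_n, y_n) \to (x, y)$ with $(x_n, y_n) \in R_K$, fix $r < \frac{1}{2} \rho(x, y)$, and set $D := D_r(x) \cup D_r(y)$. For $n$ large, $x_n \in D_r(x)$ and $y_n \in D_r(y)$; by definition of $R_K$ there are disjoint Jordan curves $J_1^{(n)} \ni x_n$ and $J_2^{(n)} \ni y_n$, and components $P_{k, n}$ of $K \cap \overline{U(J_1^{(n)}, J_2^{(n)})}$ meeting both curves with Hausdorff limit $P_{\infty, n} \supseteq \{x_n, y_n\}$. For $k$ large, $P_{k, n}$ meets both $D_r(x)$ and $D_r(y)$, so the preparatory lemma yields a component $\tilde{C}_{k, n}$ of $P_{k, n} \setminus D$ meeting both $\partial D_r(x)$ and $\partial D_r(y)$; let $Q_{k, n}$ be the component of $K \setminus D$ containing $\tilde{C}_{k, n}$.

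\textbf{The main obstacle.} The hard part will be to ensure infinitely many of the $Q_{k, n}$'s are distinct. The $\tilde{C}_{k, n}$'s are pairwise disjoint (as subsets of the pairwise disjoint components $P_{k, n}$), but a priori two of them could lie in the same component of $K \setminus D$ that ``bridges'' them by detouring through the complementary open disks $V_1^{(n)}, V_2^{(n)}$ of the annulus $U(J_1^{(n)}, J_2^{(n)})$. My plan to rule this out is to refine the Jordan curves: replace $J_i^{(n)}$ by a smaller Jordan curve through $x_n$ (resp.\ $y_n$) whose complementary disk lies inside $D_r(x)$ (resp.\ $D_r(y)$), and verify via the Hausdorff-convergence $P_{k, n} \to P_{\infty, n}$ and the preparatory lemma that enough components of $K$ still cross the refined curves to witness $(x_n, y_n) \in R_K$. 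With such a refinement the bridging detour is forced into $D$ and is hence unavailable inside $K \setminus D$, so the $Q_{k, n}$'s are pairwise distinct, yielding the required infinite family of components of $K \setminus D$ meeting both $\partial D_r(x)$ and $\partial D_r(y)$.
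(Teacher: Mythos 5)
Your preparatory lemma is correct (it is the Cut Wire argument the paper itself uses), and your ``if'' direction is sound --- it is the containment the paper dismisses as transparent. In the ``only if'' direction your reduction is also correct: if one can witness $(x_n,y_n)\in R_K$ by curves $\hat J_1\ni x_n$, $\hat J_2\ni y_n$ whose complementary disks $\hat V_1,\hat V_2$ satisfy $\hat V_1\cup\hat V_2\subset D_r(x)\cup D_r(y)$, then $K\setminus\bigl(D_r(x)\cup D_r(y)\bigr)\subset\overline{U(\hat J_1,\hat J_2)}$, so each component of the former lies in a single component of $K\cap\overline{U(\hat J_1,\hat J_2)}$, and distinct crossing pieces stay in distinct components; this is exactly how the paper passes from its auxiliary relation $\asymp_K'$ (witnessed by small Jordan domains) to the round-disk relation $\asymp_K$. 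The genuine gap is your step that produces the refined witness: it is asserted, not proved, and it is precisely the hard content of the theorem. When you shrink $J_1^{(n)}$ to $\hat J_1$, the annulus grows by the region $V_1^{(n)}\setminus\hat V_1$, and the part of $K$ sitting there --- about which the original witness says nothing --- can join the previously distinct components $P_{k,n}$ to one another, so that $K\cap\overline{U(\hat J_1,\hat J_2)}$ may a priori have only finitely many crossing components. Neither tool you invoke can exclude this: Hausdorff convergence of the $P_{k,n}$ constrains nothing inside $V_1^{(n)}$, which is where the bridging occurs, and the preparatory lemma only produces a crossing piece inside each individual continuum, never distinctness of the ambient components. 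Indeed, by your own last step the existence of such refined witnesses is equivalent to the conclusion of the theorem for the pair $(x_n,y_n)$, so the proposal in effect reduces the theorem to itself.

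What closes this gap in the paper is a different set of ideas (Theorem \ref{closure-b} and Lemma \ref{good-component}). The annulus $U(J_1^{(n)},J_2^{(n)})$ is first cut open along an arc $\beta$ with $\overline{\beta}\cap K=\emptyset$ (this uses \cite[Lemma 3.6 and Remark 3.7]{LLY-2019}), and a Sch\"onflies homeomorphism turns it into a square while retaining all of $K\cap\overline{U(J_1^{(n)},J_2^{(n)})}$; in the square the crossing continua acquire a linear order, expressed by the nested domains $U_{R,n}$, and --- decisively --- the Zoretti theorem produces around a crossing piece $Q_m$ a Jordan curve $J_m$ disjoint from the part of $K$ in the cut-open annulus and so close to $Q_m$ that the disk $B_m$ it bounds meets that part of $K$ in a set which is clopen in $K\setminus\bigl(D_\delta(a)\cup D_\delta(b)\bigr)$; this exhibits $Q_m$ as an honest component of the latter set. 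It is this separation argument by $K$-free curves, combined with the ordering, that rules out the bridging detour you correctly identify as the main obstacle; nothing in your plan plays that role, so the plan cannot be completed without essentially supplying the paper's proof.
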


\begin{theorem}\label{connected-fiber}
For any $x\in K$ the fiber  $\overline{R_K}[x]=\left\{y: (x,y)\in\overline{R_K}\right\}$ is connected.
\end{theorem}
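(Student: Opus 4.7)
The plan is to argue by contradiction using Theorem \ref{closure} as the main tool. Suppose $\overline{R_K}[x]$ is not connected, and fix a separation $\overline{R_K}[x] = A \sqcup B$ with $A, B$ disjoint nonempty closed and $x \in A$. Enclose them in disjoint open sets $U\supset A$ and $V\supset B$ in $\hat{\bbC}$. Pick $y\in B$ and a scale $r>0$ with $\overline{D_r(x)}\subset U$ and $\overline{D_r(y)}\subset V$. Applying Theorem \ref{closure} to $(x,y)\in\overline{R_K}$ yields infinitely many pairwise disjoint components $\{Q_n\}$ of $K\setminus(D_r(x)\cup D_r(y))$, each a continuum meeting both circles. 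Since every $Q_n$ is connected with points in the disjoint open sets $U,V$, each $Q_n$ contains a point $z_n\in K\setminus(U\cup V)$; by compactness pass to a subsequential limit $z^*\in K\setminus(U\cup V)$, so that $z^*\notin\overline{R_K}[x]$. The goal is then to derive a contradiction by proving $(x,z^*)\in\overline{R_K}$.

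To verify the criterion of Theorem \ref{closure} for $(x,z^*)$, I need, for each $0<s<\tfrac12\rho(x,z^*)$, infinitely many components of $K\setminus(D_s(x)\cup D_s(z^*))$ meeting both $\partial D_s(x)$ and $\partial D_s(z^*)$. For such $s$, I choose a finer scale $r'\in(0,s)$ and reapply Theorem \ref{closure} to $(x,y)$ at scale $r'$ to obtain disjoint components $\{Q'_n\}$ of $K\setminus(D_{r'}(x)\cup D_{r'}(y))$ spanning the corresponding circles. A diagonal / Hausdorff-limit argument, using that $z^*$ arises as a limit of middle points on the $Q_n$'s and persists under scale refinement, extracts infinitely many $Q'_n$ whose middle points lie in $D_s(z^*)$. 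Each such $Q'_n$ enters both $D_s(x)$ (through its point on $\partial D_{r'}(x)\subset D_s(x)$, since $r'<s$) and $D_s(z^*)$, so the boundary-bumping theorem applied inside the continuum $Q'_n$ extracts a subcontinuum $\widetilde Q'_n\subset K\setminus(D_s(x)\cup D_s(z^*))$ meeting both $\partial D_s(x)$ and $\partial D_s(z^*)$.

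The main obstacle is to ensure the pairwise disjoint $\widetilde Q'_n$ lie in infinitely many \emph{distinct} components of $K\setminus(D_s(x)\cup D_s(z^*))$, since disjoint continua can a~priori be absorbed into a single thick component. Suppose for contradiction only finitely many such components $C_1,\dots,C_M$ exist; pigeon-holing forces some $C_i$ to absorb $\widetilde Q'_n$ from infinitely many distinct $Q'_n$. But distinct $Q'_n$ being separate components of $K\setminus(D_{r'}(x)\cup D_{r'}(y))$, any continuum in $K$ joining two of them must cross $D_{r'}(x)\cup D_{r'}(y)$, and since $C_i\cap D_{r'}(x)=\emptyset$ (because $D_{r'}(x)\subset D_s(x)$), the bridging is forced through $D_{r'}(y)$. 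Letting $r'\to 0$ concentrates the bridges arbitrarily close to $y$, so by closedness $y\in C_i$. The resulting continuum $C_i$ contains $y\in B\subset V$ and meets $\partial D_s(x)\subset U$, so by connectedness it must contain a point $c\in K\setminus(U\cup V)$; iterating the analysis at the pair $(x,c)$, or invoking the symmetry of $\overline{R_K}$ to regard $C_i$ as a continuum-theoretic witness linking $A$ and $B$ inside the closed symmetric relation $\overline{R_K}$, produces the desired contradiction with the separation $A\sqcup B$ and establishes $(x,z^*)\in\overline{R_K}$, completing the proof.
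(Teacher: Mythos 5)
Your overall strategy --- extract a limit point $z^*$ of ``middle points'' of the spanning components and prove $(x,z^*)\in\overline{R_K}$ --- is in the right spirit, but the central step is a genuine gap: the claim that middle points ``persist under scale refinement.'' Your $z^*$ is manufactured from components $Q_n$ at the \emph{single} scale $r$. When you pass to a finer scale $r'<s$, the components $Q'_n$ of $K\setminus\bigl(D_{r'}(x)\cup D_{r'}(y)\bigr)$ that span the finer circles are in general a quite different family: each $Q'_n$ contains some coarse-scale spanning component (by boundary bumping), but the converse fails --- a coarse-scale component passing near $z^*$ need not lie in \emph{any} fine-scale spanning component, because the component of $K\setminus\bigl(D_{r'}(x)\cup D_{r'}(y)\bigr)$ containing it may terminate on $\partial D_r(x)\cup\partial D_r(y)$ and never reach the smaller circles. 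Concretely, $K$ could contain pairwise disjoint arcs whose endpoints lie exactly on $\partial D_r(x)$ and $\partial D_r(y)$ and which accumulate on an arc through $z^*$, while the components that actually penetrate toward $x$ and $y$ (and thus witness $(x,y)\in\overline{R_K}$ at all scales) stay far from $z^*$; then $z^*$ is a limit of middle points at scale $r$, yet $(x,z^*)\notin\overline{R_K}$. So the statement you need about your $z^*$ is false as a general geometric claim, your proposal never uses the disconnection hypothesis to exclude this behavior, and no unspecified ``diagonal / Hausdorff-limit argument'' can supply it: the point $z^*$ itself is chosen at the wrong level of the construction. (Your endgame has a second defect: after fixing $s$ and the finitely many components $C_1,\dots,C_M$ you let $r'\to 0$, which changes the whole auxiliary construction, and the closing sentence ``iterating the analysis at $(x,c)$ \dots produces the desired contradiction'' is not an argument.)

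The paper's proof (Theorem \ref{connected-fiber-b}) avoids exactly this trap in two ways worth internalizing. First, the limit point is taken across scales simultaneously: for $r_n\to 0$ one forms Hausdorff limits $P_n$ of spanning components at scale $r_n$, then $P_\infty=\lim_n P_n$; a point $z\in P_\infty$ is, by construction, approximated at \emph{every} scale $n$ by points $z_n\in P_n$, and this is a direct proof (the continuum $P_\infty\ni x,y$ lies in the fiber), not a proof by contradiction. Second --- and this is the key technical idea missing from your proposal --- the paper never tries to verify the disk criterion of Theorem \ref{closure} for the pair $(x,z)$. Instead it exhibits actual pairs $(x_n',z_n')\in R_K$ with $x_n'\to x$ and $z_n'\to z$, by building, inside a rectangle $\Delta_n$ bounded by two arcs disjoint from $K$ and two subarcs of the circles, the Jordan curves $J_n'=\partial\bigl(\Delta_n\setminus D_\epsilon(x_n)\bigr)$ and $J_n''=\partial D_{r_n}(z_n)$, so that the spanning components $P_{n,k}$ supply infinitely many components of $K\cap\overline{U(J_n',J_n'')}$ joining them; membership in $R_K$ itself then holds by definition, and $(x,z)\in\overline{R_K}$ follows because $\overline{R_K}$ is the closure of $R_K$. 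Note also that the distinctness of those components is automatic there (pieces of distinct $P_{n,k}$ stay in distinct components, since the annulus avoids the deleted disks), which is the clean replacement for your bridging argument.
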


\begin{theorem}\label{invariant-fiber}
Given a compactum $K\subset\widehat{\bbC}$ and a branched covering $f:\widehat{\bbC}\rightarrow \widehat{\bbC}$, if $u$ is a point in the preimage $L=f^{-1}(K)$ and $x=f(u)$ then $f\left(\overline{R_L}[u]\right)=\overline{R_K}[x]$.
\end{theorem}

\begin{remark}
It remains unknown  whether $R_K$ itself is already a closed relation.  Theorem \ref{closure} provides a nontrivial characterization of the closure $\overline{R_K}$. By Theorem \ref{fiber-and-imp}, if $D\subset\widehat{\bbC}$ is a simply connected region and $K=\widehat{\bbC}\setminus D$ contains more than one point then for any principal point $x$ of a prime end of $D$, the impression of this prime end is contained in $\overline{R_K}[x]$. When $x$ is not a principal point, the previous containment may not hold. See Example \ref{fiber-and-imp-0}.
\end{remark}

\begin{remark}Note that in Theorem \ref{invariance} the preimage $f^{-1}(d)$ of any $d\in\Dc_K^{PC}$ has finitely many components, each of which is an element of $\Dc_L^{PC}$. We call this special phenomenon the {\bf invariance of atoms} under branched coverings. On the other hand, in Theorem \ref{invariant-fiber} it is possible that $f^{-1}\left(\overline{R_K}[x]\right)$ is a continuum that properly contains $\overline{R_L}[u]$ for all $u\in f^{-1}\left(\overline{R_K}[x]\right)$. See Example \ref{inverse_of_fibers}.
\end{remark}


In section 2 we prove  Theorems \ref{closure} and \ref{connected-fiber}. In section 3 we prove Theorem \ref{invariant-fiber} and then use it to prove Theorem \ref{invariance}. In section 4 we give more examples that are related to our study.

\section{On the closed relation $\overline{R_K}$ and the connectedness of its fibers}

In this section we characterize $\overline{R_K}$ and show that each of its fibers $\overline{R_K}[x]$ is a continuum.  This establishes Theorems \ref{closure} and \ref{connected-fiber}. We also find a connection between the fibers of $\overline{R_K}$ and the prime end impressions of $D$, where $D$ is a simply connected domain with $K=\widehat{\bbC}\setminus D$.

In our characterization of $\overline{R_K}$, we shall frequently use the notation $U(J_1,J_2)$ for any disjoint Jordan curves $J_1$ and $J_2$, which denotes  the component of $\widehat{\bbC}\setminus(J_1\cup J_2)$ whose boundary is $J_1\cup J_2$.

We also need a  closed relation $\asymp_K$ defined in the following way.
Two points $x,y\in K$ are related under $\asymp_K$ provided that either $x=y$ or for  any $\displaystyle r\in\left(0,\frac12\rho(x,y)\right)$ the difference $K\setminus(D_r(x)\cup D_r(y))$ has infinitely many components intersecting both $\partial D_r(x)$ and $\partial D_r(y)$. Here $\rho$ denotes the spherical distance on $\widehat{\bbC}$.

Clearly, for $0<r<\frac12\rho(x,y)$ we can find $x_r\in\partial D_r(x)$ and $y_r\in\partial D_r(y)$ with $(x_r,y_r)\in R_K$. This implies that $\asymp_K\subset\overline{R_K}$, when these two relations are considered as subsets of $K\times K$. To obtain Theorem \ref{closure}, we only need to establish the reverse containment $\overline{R_K}\subset\  \asymp_K$.

The following lemma implies that $\asymp_K$ is closed.
\begin{lemma}\label{asymp_K}
For two points $x\ne y\in K$ to be related under $\asymp_K$ it is necessary and sufficient that there exist two sequences of Jordan domains $\{D_n': n\ge1\}$ and $\{D_n'': n\ge1\}$, with $\{x\}=\bigcap D_n'$ and $\{y\}=\bigcap D_n''$, such that  for all $n\ge1$ the next two requirements are both satisfied:
\begin{itemize}
\item that $\overline{D_{n+1}'}\subset D_n'$ and $\overline{D_{n+1}''}\subset D_n''$.
\item that $K\setminus(D_n'\cup D_n'')$ has infinitely many components intersecting both $\partial D_n'$ and $\partial D_n''$.
\end{itemize}
\end{lemma}
The necessity part is trivial. The sufficiency part is also direct. Actually, for  $0<r<\frac12\rho(x,y)$ we can find two Jordan domains $D_n'$ and $D_n''$ whose closures  are respectively contained in $D_r(x)$ and $D_r(y)$. Clearly, if a component of $K\setminus(D_n'\cup D_n'')$ intersects both $\partial D_n'$ and $\partial D_n''$, then it  contains a component of $K\setminus(D_r(x)\cup D_r(y))$ that intersects $\partial D_r(x)$ and $\partial D_r(y)$ both.

Since $\asymp_K$ is a closed relation contained in $\overline{R_K}$, Theorem \ref{closure} follows from the following.

\begin{theorem}\label{closure-b}
Two points $x\ne y\in K$ with $(x,y)\in R_K$ are related under $\asymp_K$.
\end{theorem}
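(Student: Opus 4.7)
The plan is to exploit the witnesses of $(x,y) \in R_K$: fix disjoint Jordan curves $J_1 \ni x$, $J_2 \ni y$ and components $P_k$ of $\overline{U(J_1, J_2)} \cap K$ with $P_k \cap J_i \ne \emptyset$ for $i = 1, 2$ and $P_k \to P_\infty \supset \{x, y\}$ in the Hausdorff metric. I would take the required nested Jordan domains from the definition of $\asymp_K'$ to be simply the open disks $D_n' = D_{r_n}(x)$ and $D_n'' = D_{r_n}(y)$ for a strictly decreasing sequence $r_n \downarrow 0$, with $r_n < \tfrac12\rho(x, y)$ and small enough that $\overline{D_n'} \cap J_2 = \emptyset = \overline{D_n''} \cap J_1$. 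The nesting, shrinking, and $\bigcap D_n' = \{x\}$, $\bigcap D_n'' = \{y\}$ conditions are then automatic, so only the third property — that $K \setminus (D_n' \cup D_n'')$ has infinitely many components meeting both $\partial D_n'$ and $\partial D_n''$ — remains.

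For a fixed $n$, the Hausdorff convergence $P_k \to P_\infty \supset \{x, y\}$ gives $P_k \cap D_n' \ne \emptyset \ne P_k \cap D_n''$ for all sufficiently large $k$. Two applications of the standard boundary-bumping principle inside the continuum $P_k$ (first to the proper closed subset $P_k \setminus D_n'$, taking the component $A_{k,n}$ containing a chosen $p'' \in P_k \cap D_n''$, which then must meet $\partial D_n'$; then to the proper closed subset $A_{k,n} \setminus D_n''$, taking the component $C_{k,n}$ containing a chosen point of $A_{k,n} \cap \partial D_n'$, which then must meet $\partial D_n''$) yield a subcontinuum $C_{k,n} \subset P_k \setminus (D_n' \cup D_n'')$ intersecting both $\partial D_n'$ and $\partial D_n''$. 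Because the $P_k$'s are pairwise disjoint components of $\overline{U(J_1, J_2)} \cap K$, the $C_{k,n}$ are pairwise disjoint and, viewed within $\overline{U(J_1,J_2)} \cap K \setminus (D_n' \cup D_n'')$, already lie in distinct components.

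The main obstacle is upgrading this to distinct components of the full space $K \setminus (D_n' \cup D_n'')$, for a priori two $C_{k,n}, C_{k',n}$ might be merged by a $K$-continuum that escapes the annulus through one of the two complementary regions of $\hat{\mathbb{C}} \setminus \overline{U(J_1, J_2)}$. I would handle this by contradiction: if only finitely many components of $K \setminus (D_n' \cup D_n'')$ met both boundary circles, pigeonholing would place infinitely many $C_{k_j, n}$ in a single component $Q$; extracting a Hausdorff-convergent subsequence would then give a continuum $C_\infty \subset Q \cap P_\infty$ meeting both boundaries. Any merging bridge inside $Q$ joining two distinct $C_{k_j,n}, C_{k_l,n}$ has to cross $K \cap (J_1 \cup J_2) \setminus (D_n' \cup D_n'')$, and the delicate part of the argument is to show — using the Jordan curve theorem together with the Hausdorff clustering of the crossings $P_k \cap J_i$ toward $P_\infty \cap J_i$ — that no such consistent family of merging bridges can persist for all large $k$. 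If plain disks prove insufficient for this closing step, the flexibility built into the $\asymp_K'$ formulation permits replacing the $D_n'$, $D_n''$ by more carefully shaped Jordan domains (for instance thin neighborhoods of small sub-arcs of $J_1$, $J_2$ around $x$, $y$) designed to sever any outside bridges. This planar-topological bookkeeping is what I would expect to be the chief technical hurdle.
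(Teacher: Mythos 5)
Your setup and your first step are sound and match the paper's: the double boundary-bumping inside $P_k$ is exactly the paper's application of the Cut Wire Theorem, producing continua $C_{k,n}\subset P_k\setminus(D_n'\cup D_n'')$ that meet both boundary circles and that lie in distinct components of $\overline{U(J_1,J_2)}\cap K\setminus(D_n'\cup D_n'')$. You have also correctly located the crux: nothing yet prevents parts of $K$ \emph{outside} the closed annulus, attached to $P_k$ at points of $P_k\cap(J_1\cup J_2)$ that stay far from $x$ and $y$, from merging infinitely many of the $C_{k,n}$ into a single component of $K\setminus(D_n'\cup D_n'')$. But at precisely this point your proposal stops being a proof: you state that ``the delicate part of the argument is to show \dots\ that no such consistent family of merging bridges can persist,'' and you offer only the hope that Jordan-curve-theorem bookkeeping, or reshaped Jordan domains, will close the gap. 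That missing step is the actual content of the theorem; the paper devotes essentially all of its argument (the reduction to Lemma \ref{good-component} and that lemma's proof) to it, and it is not mere bookkeeping.

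Concretely, the paper proceeds as follows. It first chooses an arc $\beta\subset U(J_1,J_2)$ with $\overline{\beta}\cap K=\emptyset$ joining $J_1$ to $J_2$ (this exists by \cite[Lemma 3.6 and Remark 3.7]{LLY-2019}), thickens it, and applies the Sch\"onflies theorem to replace the closed annulus by the square $\Delta$; this normalization is the very reason the statement is formulated with $\asymp_K'$ (arbitrary Jordan domains, namely the $h^{-1}$-images of round disks) rather than with round disks as in your plan. Then, inside the square, it passes to a Hausdorff limit $Q_\infty$ of the $Q_n$, uses the linear order of the complementary domains $U_{R,n}$ (for $n\ne m$ exactly one of $U_{R,n}\subset U_{R,m}$, $U_{R,m}\subset U_{R,n}$ holds) to select a suitable index $m$, and --- this is the key tool your sketch lacks --- invokes the Zoretti theorem to produce a Jordan curve $J_m$ enclosing $Q_m$, disjoint from $K\cap\Delta'$ and $\epsilon$-close to $Q_m$ with $\epsilon<\operatorname{dist}(Q_m,Q_0\cup Q_n)$; this exhibits $Q_m$ as a component of a relatively clopen subset of $K\setminus(D_\delta(a)\cup D_\delta(b))$, hence as a component of that whole set. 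Your pigeonhole-plus-limit contradiction scheme is not a substitute: in configurations where the $P_n$ touch $J_1\cup J_2$ at points bounded away from $\{x,y\}$ (which the hypotheses do not exclude, and which is exactly when external ``bridges'' can attach), deciding which $Q_m$ survive as genuine components of $K\setminus(D_n'\cup D_n'')$ requires a separation theorem of Zoretti type together with careful control of where the $Q_n$ meet the boundary of the working region; your proposal contains no argument at that level of precision. So, as it stands, the proposal has a genuine gap at the theorem's central difficulty.
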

\begin{proof}
Let $J_1\ni x$ and $J_2\ni y$ be two disjoint Jordan curves such that $K\cap\overline{U(J_1,J_2)}$ has an infinite sequence of distinct components $P_n$ intersecting both $J_1$ and $J_2$, whose limit under the Hausdorff distance $P_\infty=\lim\limits_{n\rightarrow\infty}P_n$  contains $\{x,y\}$.
Let $P_0$ be the component of $K\cap\overline{U(J_1,J_2)}$ with $P_0\supset P_\infty$.
Assume that $P_0\cap P_n=\emptyset$ for all $n\ge1$.

Fix an open arc $\beta\subset U(J_1,J_2)$, with $\overline{\beta}\cap K=\emptyset$, such that the endpoints belong to $J_1$ and $J_2$ respectively. See for instance \cite[Lemma 3.6 and Remark 3.7]{LLY-2019}.  Slightly thickening $\beta$, we obtain a Jordan domain $W\subset U(J_1,J_2)$ such that   $\overline{W}\cap K=\emptyset$ and $\partial W$ intersects $U(J_1,J_2)$ at  two open arcs $\beta_0,\beta_1$ with $\overline{\beta_0}\cap\overline{\beta_1}=\emptyset$. Therefore, $\Delta_\infty=U(J_1,J_2)\setminus\overline{W}$ is a Jordan domain whose boundary consists of  $\beta_0, \beta_1$ and two arcs $\gamma_1\subset J_1$ and $\gamma_2\subset J_2$. Clearly, $K\cap\overline{U(J_1,J_2)}=K\cap\overline{\Delta_\infty}$. Moreover,  $\overline{\Delta_\infty}\setminus P_\infty$ has two special components, say $W_0$ and $W_1$, satisfying $\beta_i\subset W_i$. With no loss of generality, we may assume that  $P_n\subset W_1$ for infinitely many $n$.

Given  $k\ge2$, we pick two Jordan domains $D_k'\ni x$ and $D_k''\ni y$ of diameter $<\frac1k|x-y|$ that are respectively separated by $J_1$ and $J_2$ into exactly two Jordan domains. Then there is an infinite set $\mathcal{N}$ of integers such that every $P_n$ with $n\in\mathcal{N}$ is a subset of $W_1$ and intersects the two Jordan curves $J_k'=\partial D_k'$ and $J_k''=\partial D_k''$ both. For each $n\in\mathcal{N}$, we pick a component $B_n$ of $P_n\setminus(D_k'\cup D_k'')$ that intersects $J_k'$ and $J_k''$ both.
The only issue is to prove the following.
\begin{lemma}\label{coincidence_lemma}
Every $B_n$ with $n\in\mathcal{N}$ is a component of $K\cap\overline{U\left(J_k',J_k''\right)}$. Therefore, there are  $x_k\in J_k'$ and  $y_k\in J_k''$ with $(x_k,y_k)\in R_K$.
\end{lemma}
Indeed, for any $n_0\in\mathcal{N}$ the set $\Delta_\infty\setminus\left(\overline{D_k'}\cup\overline{D_k''}\cup B_{n_0}\right)$ has two special components $D_0$ and $D_1$, with $\beta_i\subset\partial D_i$, such that $D_0\supset B_n$ for infinitely many $n\in\mathcal{N}$. This is the case since $P_\infty\setminus(D_k'\cup D_k'')$ has a component that is contained in $D_0$ and intersects the two Jordan curves $J_k'$ and $J_k''$ both. By \cite[Lemma 3.6(3)]{LLY-2019},  for every $B_n\subset D_0$ we may pick two arcs $\alpha_n,\alpha_n'\subset D_0$ each of which connects a point on $J_k'$ to a point on $J_k''$. See Figure \ref{good-locations}, in which the endpoints of $\alpha_n$ marked by $a_n$ and $b_n$ while those of $\alpha_n'$ by $a_n'$ and $b_n'$.
\begin{figure}[ht]
\begin{center}\vspace{-0.382cm}
\begin{tikzpicture}[scale=1.618,x=1cm,y=1cm]
\draw  (0,0) rectangle (6.25,4);
\coordinate [label=90:$x$] (x) at (3,4); \coordinate [label=270:$y$] (y) at (3,0);
\draw[line width=2pt,color=blue] (3.5,1.54) --(3.5,2.46);
\draw[line width=2pt,color=blue] (4.25,1.03) --(4.25,2.97);
\draw (x)[fill=black]circle(0.03); \draw  (y)[fill=black]circle(0.03);
\draw (0,2)node[right]{$\beta_0$};   \draw  (6.25,2)node[left]{$\beta_1$};
\node at (3.75,2.0) {$B_n$};
\node at (4.6,2.0) {$B_{n_0}$};
\node at (3,0.6) {\small$D_k''\cap U(J_1,J_2)$}; \node at (3,3.4) {\small$D_k'\cap U(J_1,J_2)$};
\draw[line width=1.5pt,color=gray] (1.382,4) arc(180:360:1.618cm); \draw[line width=1.5pt,color=gray] (1.382,0) arc(180:0:1.618cm);
\draw[line width=0.5pt,color=blue] (4.0,1.26)--(4.0,2.74);
\fill[blue] (4.0,1.275) circle(0.03); \fill[blue] (4.0,2.725) circle(0.03);
\node at (3.9,1.2) {$b_n$}; \node at (3.9,2.9) {$a_n$};
\draw[line width=0.5pt,color=blue] (3.25,1.6)--(3.25,2.4);
\fill[blue] (3.25,1.6) circle(0.03); \fill[blue] (3.25,2.4) circle(0.03);
\node at (3.25,1.33) {$b_n'$}; \node at (3.25,2.67) {$a_n'$};
\end{tikzpicture}
\end{center}
\vskip -0.618cm
\caption{Relative locations of $B_{n_0}, B_n$ and the two arcs $\alpha_n, \alpha_n'$.}\label{good-locations}
\end{figure}
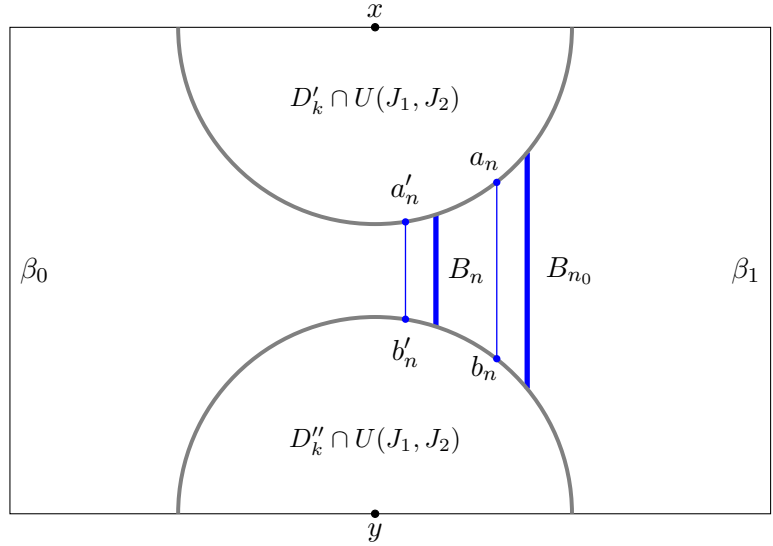

Let $D_n\subset D$ be the unique Jordan domain whose boundary consists of $\alpha_n$,  $\alpha_n'$, and two other arcs, that are respectively contained in $\partial D_k'$ and $\partial D_k''$. Then $B_n$ is a component of $K\cap\overline{D_n}$ hence is also a component of $K\cap\overline{U(J_k',J_k'')}$. Here we have $K\cap\overline{U(J_k'.J_k'')}=K\setminus\left(D_k'\cup D_k''\right)$.
\end{proof}

In order to obtain Theorem \ref{connected-fiber}, we just need to prove the following.
\begin{theorem}\label{connected-fiber-b}
If $(x,y)\in\overline{R_K}$ then $\overline{R_K}[x]$ has a subcontinuum that contains $\{x,y\}$.
\end{theorem}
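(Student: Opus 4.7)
The plan is to construct an explicit subcontinuum $L$ of $K$ via a double Hausdorff limit, verify $\{x,y\}\subset L$, and then show $L\subset\overline{R_K}[x]$. For each integer $n\ge1$, Theorem \ref{closure} supplies infinitely many components $Q_m^{(n)}$ $(m\ge1)$ of $K\setminus(D_{1/n}(x)\cup D_{1/n}(y))$, each meeting both $\partial D_{1/n}(x)$ and $\partial D_{1/n}(y)$. Passing to a Hausdorff-convergent subsequence in $m$ yields a subcontinuum $L_n\subset K$ that still meets both circles. Passing to another Hausdorff-convergent subsequence in $n$ produces a subcontinuum $L\subset K$; since each $L_n$ has points on the shrinking circles $\partial D_{1/n}(x)$ and $\partial D_{1/n}(y)$, the limit $L$ automatically contains both $x$ and $y$.

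The crux is to verify $L\subset\overline{R_K}[x]$. By Theorem \ref{closure}, this amounts to showing, for each $z\in L$ and each $s\in(0,\tfrac12\rho(x,z))$, that $K\setminus(D_s(x)\cup D_s(z))$ has infinitely many components meeting both $\partial D_s(x)$ and $\partial D_s(z)$. Fix such $z$ and $s$, and pick $n$ with $1/n<s/10$ and with $L_n$ within Hausdorff distance $s/10$ of $L$; then for all large $m$ in the relevant subsequence, $Q_m^{(n)}$ passes within $s/10$ of each of $x$, $y$, and $z$. Applying the cut-wire scheme from the proof of Lemma \ref{good-component} to $Q_m^{(n)}$ with the disjoint open disks $D_s(x)$ and $D_s(z)$, I extract a subcontinuum $R_m^{(n)}$ of $Q_m^{(n)}\setminus(D_s(x)\cup D_s(z))$ meeting both boundary circles, and $R_m^{(n)}$ sits in a component $T_m^{(n)}$ of $K\setminus(D_s(x)\cup D_s(z))$ with the same property.

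The main obstacle is to show that the collection $\{T_m^{(n)}\}$ represents infinitely many distinct components. Arguing by contradiction and pigeonhole, fix (for some large $n$) a single component $T$ of $K\setminus(D_s(x)\cup D_s(z))$ containing infinitely many of the $R_m^{(n)}$. Since $T$ avoids $D_{1/n}(x)$, the connectedness of $T\cup Q_m^{(n)}$ combined with the maximality of $Q_m^{(n)}$ as a component of $K\setminus(D_{1/n}(x)\cup D_{1/n}(y))$ forces $T\cap D_{1/n}(y)\ne\emptyset$ (otherwise $T\subset Q_m^{(n)}$ for every such $m$, contradicting the distinctness of the $Q_m^{(n)}$). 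Iterating over $n\to\infty$ either manufactures infinitely many distinct components (and we are done), or yields a common $T$ meeting $D_{1/n}(y)$ for all large $n$, in which case the closedness of $T$ forces $y\in T$. Ruling out this degenerate ``$y\in T$'' scenario is the hardest part, and I expect to handle it via a second application of Theorem \ref{closure} to the pair $(x,y)$ at scale $s$: among the infinitely many components $E_k$ of $K\setminus(D_s(x)\cup D_s(y))$ meeting both circles there, those that enter $D_s(z)$ yield, by the same cut-wire construction, subcontinua of $K\setminus(D_s(x)\cup D_s(z))$ meeting $\partial D_s(x)$ and $\partial D_s(z)$, and a parallel pigeonhole argument shows they cannot all be absorbed by $T$. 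Once $L\subset\overline{R_K}[x]$ is secured, $L$ itself serves as the required subcontinuum containing $\{x,y\}$.
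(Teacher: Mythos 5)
Your construction of $L$ is exactly the paper's $P_\infty$, and reducing ``$L\subset\overline{R_K}[x]$'' to the component count of Theorem \ref{closure} is legitimate; likewise the cut-wire extraction of $R_m^{(n)}\subset T_m^{(n)}$ and the observation that a single component $T$ absorbing $R_m^{(n)}$ for infinitely many $m$ must meet $D_{1/n}(y)$, hence (persisting over $n$) must contain $y$. The genuine gap is the step you yourself flag: disposing of the ``$y\in T$'' case. Your proposed fix cannot work as stated, for two reasons. First, among the components $E_k$ of $K\setminus(D_s(x)\cup D_s(y))$ meeting both $\partial D_s(x)$ and $\partial D_s(y)$, there is no reason any should enter $D_s(z)$: the sub-piece of a component of $K\setminus(D_{1/n}(x)\cup D_{1/n}(y))$ that crosses from $\partial D_s(x)$ to $\partial D_s(y)$ may be entirely disjoint from the sub-piece that visits $D_{s/10}(z)$ (picture a $Q_m^{(n)}$ running directly from near $x$ to near $y$ and only then making an excursion to $z$ and back), so your list ``those that enter $D_s(z)$'' can be empty. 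Second, and more fundamentally, no pigeonhole count can rule out absorption by $T$, because the obstruction is planarity, not cardinality. What actually kills the degenerate case is this: if $T\ni y$ contained $R_m^{(n)}$ for three distinct $m$, then the three components of $T\setminus D_{1/n}(y)$ containing them would be pairwise disjoint continua (each lies inside the corresponding $Q_m^{(n)}$ by maximality, since $T$ avoids $D_s(x)\supset D_{1/n}(x)$), and each would meet all three of the pairwise disjoint closed disks $\overline{D_s(x)}$, $\overline{D_s(z)}$, $\overline{D_{1/n}(y)}$ --- the contact with $\partial D_{1/n}(y)$ coming from boundary bumping because $y\in T$. Three disjoint continua each meeting three disjoint closed disks is a $K_{3,3}$-type configuration that cannot exist in $\hat{\mathbb{C}}$. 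You never invoke any such planarity fact, and without one the degenerate configuration is perfectly consistent (it is realizable in abstract compacta), so the proof as written is incomplete. (A smaller, fixable omission: your analysis tacitly assumes $\overline{D_s(x)}$, $\overline{D_s(z)}$ and a neighborhood of $y$ are pairwise disjoint, which fails for $s$ near $\tfrac12\rho(x,z)$ when $z$ is close to $y$; one should verify the criterion only for small $s$ and then use the fact, implicit in the paper's comparison of $\asymp_K$ with $\asymp_K'$, that crossing components at a small scale yield distinct crossing components at every larger scale.)

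It is worth seeing how the paper avoids this difficulty altogether. Rather than counting components of $K\setminus(D_s(x)\cup D_s(z))$ --- a region that contains a neighborhood of $y$, which is precisely what allows distinct $Q$'s to merge --- the paper manufactures actual $R_K$-related pairs $(x_n',z_n')\to(x,z)$ inside an auxiliary annulus $U(J_n',J_n'')$, where $J_n'$ is the boundary of the rectangle $\Delta_n$ (cut off from the original annulus by two arcs disjoint from $K$) with a small disk about $x_n$ removed, and $J_n''=\partial D_{r_n}(z_n)$. Since $\overline{U(J_n',J_n'')}$ lies inside $\hat{\mathbb{C}}\setminus(D_{r_n}(x)\cup D_{r_n}(y))$, pieces of distinct $P_{n,k}$ can never coalesce there, and the definition of $R_K$ applies directly. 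So you have two options: supply the planarity ($K_{3,3}$) lemma, after which your argument closes cleanly (each component of $K\setminus(D_s(x)\cup D_s(z))$ then contains at most two of the $R_m^{(n)}$, forcing infinitely many distinct $T_m^{(n)}$), or adopt the paper's annulus construction; as submitted, the proof has a genuine gap at its central step.
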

\begin{proof} Assume $x\ne y$. We fix a strictly decreasing sequence $\{r_n\}$ with $\displaystyle 3n\cdot r_n\in\left(0,\rho(x,y)\right)$ and choose $P_n(n\ge1)$ as follows. First, by Theorem \ref{closure} and the assumption $(x,y)\in\overline{R_K}$ we know that $\displaystyle K\setminus\left(D_{r_1}(x)\cup D_{r_1}(y)\right)$ has infinitely many components intersecting $\partial D_{r_1}(x)$ and $\partial D_{r_1}(y)$ both. Pick such a component $P_1$. If $P_1,\ldots,P_n$ are chosen, we consider the  components of $\displaystyle K\setminus\left(D_{r_{n+1}}(x)\cup D_{r_{n+1}}(y)\right)$ intersecting both $\partial D_{r_{n+1}}(x)$ and $\partial D_{r_{n+1}}(y)$. There are infinitely many of them and we can pick one, say $P_{n+1}$,  that is disjoint from $P_1\cup\cdots\cup P_n$. By going to an appropriate subsequence, we may assume that $P_\infty=\lim\limits_{n\rightarrow\infty}P_n$ under the Hausdorff distance.

Since $y\in P_\infty$,  we just need to verify that $(x,z)\in \overline{R_K}$ holds for all $z\in P_\infty\setminus\{x,y\}$.

Given $z\in P_\infty\setminus\{x,y\}$, we fix $N_z>1$ with
$\frac{2}{N_z}\le\min\{\rho(x,z),\rho(y,z),\rho(x,y)\}$.
Then  $\overline{D_{r_n}(x)}$, $\overline{D_{r_n}(y)}$, and $\overline{D_{r_n}(z)}$ are pairwise disjoint for all $n\ge N_z$.
For  $n\ge N_z$, let $A_n$ be the annulus  with $\partial A_n=\partial D_{r_n}(x)\cup \partial D_{r_n}(y)$.
We will find $a_n\in\partial D_{r_n}(x)$ and $b_n\in\partial D_{r_n}(z)$ with $(a_n,b_n)\in R_K$.

By the same method as used in Theorem \ref{closure-b}, we can pick open arcs $\beta_0,\beta_1\subset A_n$ separating $A_n$ into two Jordan domains such that the closure of  one of them, say $D_n$, contains $P_\infty$. See Figure \ref{conn-fiber}.
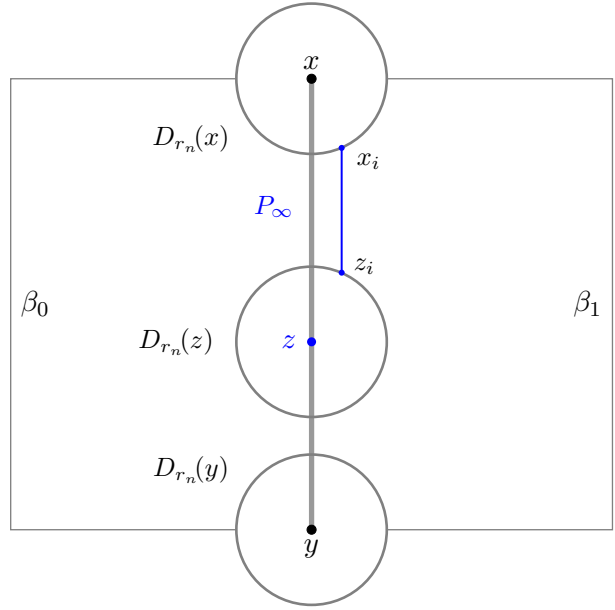
\begin{figure}[ht]
\centering
\vskip -0.5cm
\begin{tikzpicture}[scale=2.0,x=1cm,y=1cm]
\draw[line width=0.5pt,color=gray] (2.5,0) --(1,0)--(1,3)--(2.5,3);
\draw[line width=0.5pt,color=gray] (3.5,0) --(5,0)--(5,3)--(3.5,3);
\draw[line width=2pt,color=gray!80] (3,0) --(3,3);
\coordinate [label=90:$x$] (x) at (3,3); \coordinate [label=270:$y$] (y) at (3,0);
\draw (x)[fill=black]circle(0.03); \draw  (y)[fill=black]circle(0.03);
\draw (1,1.5)node[right]{$\beta_0$};   \draw  (5.0,1.5)node[left]{$\beta_1$};
\node at (2.2,0.4) {\small$D_{r_n}\!(y)$};
\node at (2.1,1.25) {\small$D_{r_n}\!(z)$};
\node at (2.2,2.6) {\small$D_{r_n}\!(x)$};
\draw[line width=1.0pt,color=gray] (3.5,3) arc(0:360:0.5cm);
\draw[line width=1.0pt,color=gray] (3.5,0) arc(0:360:0.5cm);
\node[blue] at (2.75,2.15) {\small$P_\infty$};
\node[blue] at (2.85,1.25) {$z$};
\fill[blue] (3.0,1.25) circle(0.03);
\draw[line width=1.0pt,color=gray] (3.5,1.25) arc(0:360:0.5cm);
\draw[line width=0.75pt,color=blue] (3.2,2.55) --(3.2,1.7);
\fill[blue] (3.2,2.54) circle(0.02);
\fill[blue] (3.2,1.71) circle(0.02);
\node at (3.38,2.45) {\small$x_i$};  \node at (3.35,1.75) {\small$z_i$};
\end{tikzpicture}
\vskip -0.5cm
\caption{Relative locations of $x_i,z_i,P_\infty$ and $D_{r_n}(x),D_{r_n}(y),D_{r_n}(z)$.}\label{conn-fiber}
\vskip -0.382cm
\end{figure}
Here $K\cap\left(\overline{\beta_0}\cup\overline{\beta_1}\right)=\emptyset$ and $U_n=D_n\setminus\overline{D_{r_n}(z)}$ is an open annulus.
Pick $k_n\ge1$ such that $P_i\subset\overline{D_n}$ for all $i\ge k_n$. For each $i\ge k_n$, we can pick a component $Q_i$ of $P_i\setminus D_{r_n}(z)$ that intersects $\partial D_{r_n}(x)$. This is doable, since $P_i\cap\partial D_{r_n}(x)\ne\emptyset$ for all $i\ge k_n$. Then we shall have $Q_i\cap\partial D_{r_n}(z)\ne\emptyset$ by the Cut Wire Theorem \cite[p.72, Theorem 5.2]{Nadler92}. Since $\{Q_i: i\ge k_n\}$ has a subsequence converging to a limit continuum $Q_\infty$ under the Hausdorff distance,  $(a_n,b_n)\in R_K$ holds for any $a_n\in\left(Q_\infty\cap \partial D_{r_n}(x)\right)$ and any $b_n\in\left(Q_\infty\cap \partial D_{r_n}(z)\right)$. This implies  that $(x,z)\in\overline{R_K}$, since we have $\lim\limits_{n\rightarrow\infty}\rho(a_n,x)=0=\lim\limits_{n\rightarrow\infty}\rho(b_n,z)$.
\end{proof}

The rest part of this section is about simply connected domains $D\subset\widehat{\bbC}$ that have more than one boundary point. In Theorem \ref{fiber-and-imp} we will obtain a basic connection between the prime end impressions of $D$ and certain fibers of $\overline{R_K}$ with $K=\widehat{\bbC}\setminus D$.  This connection is of its own interest and will be used in Example \ref{inverse_of_fibers}. On the other hand, by Theorem \ref{fiber-and-imp} and \cite[Lemma 17]{BCO11} one can easily infer a special case of \cite[Theorem 5]{LLY-2019}, when $K\subset\widehat{\bbC}$ is a full continuum.

The prime end theory is introduced by Carath\'eodory \cite{Caratheodory13-a} and his work is in German. For the sake of convenience, we recall some notions and basic results from  \cite{UrsellYoung51}.

An open or half-open {\bf path} in $D$ is a continuous map $\alpha: I\rightarrow D$ or its image, with  $I=(0,1)$ or  $I=[0,1)$. Such a path is called a {\bf simple path} or an {\bf arc}, if $\alpha$ is injective. The {\bf initial limit set} of $\alpha$ consists of all points $z\in\overline{D}$ such that $z=\lim\limits_{n\rightarrow\infty}\alpha(t_n)$ for some decreasing sequence $t_n\in(0,1)$ with $t_n\rightarrow 0$; and the {\bf final limit set} is  the collection of  $z\in\overline{D}$ such that $z=\lim\limits_{n\rightarrow\infty}\alpha(t_n)$ for an increasing sequence $t_n\in(0,1)$ with $t_n\rightarrow 1$. If the initial (respectively, final) limit set consists of a single point $z_0$, we call $z_0$ the {\bf initial point} (respectively, the {\bf endpoint}) of $\alpha$.

A {\bf crosscut} in  $D$ means an open arc $\alpha$ whose initial point and endpoint are different and both belong to the boundary  $\partial D$.
A {\bf chain of crosscuts} consists of an infinite sequence of crosscuts $\{q_n\}$ with disjoint closures such that $F_n\supset F_{n+1}$ for all $n\ge1$, where $F_n$ is the component of $D\setminus q_n$  containing $q_{n+1}$ and will be called the {\bf shadow} of $q_n$.

A {\bf C-transformation} of $D$ is a homeomorphism of $D$ onto $\mathbb{D}=\{z\in\mathbb{C}: |z|<1\}$ such that the image  of any crosscut of $D$ is a crosscut of $\mathbb{D}$ and that the endpoints of those crosscuts of $\mathbb{D}$ are dense on $\partial\mathbb{D}$. It is known \cite[(4.1) to (4.3)]{UrsellYoung51} that a C-transformation is the composition of a conformal transformation of $D$ onto $\mathbb{D}$ with a homeomorphism of $\overline{\mathbb{D}}$ onto itself.

We fix a C-transformation $T$, a conformal transformation of $D$ onto $\mathbb{D}$.
Two half-open paths $\alpha,\beta: I\rightarrow D$ are said to {\bf possess the same prime end}
if the endpoints of $T(\alpha),T(\beta)$ coincide and belong to $\partial\mathbb{D}$. Therefore, every prime end of $D$ corresponds to a point $p\in\partial\mathbb{D}$, under the fixed C-transformation $T$. By \cite[(4.5) to (4.7)]{UrsellYoung51},  two half-open paths $\alpha,\beta$ possess the same prime end if and only if they cross a chain of crosscuts $\{q_n\}$ whose closures are concentric circular arcs tending to the common center.

By \cite[(4.8)]{UrsellYoung51}, among the half-open paths possessing the same prime end there is one whose limit set is maximal (i.e. includes the others) and one whose limit set is minimal (i.e. is contained in all the others). The maximal limit set is called the {\bf impression} of the prime end while the minimal limit set is called the {\bf principal set}, whose elements are often referred to as {\bf principle points}  of the prime end. If the prime end corresponds to $p\in\partial\mathbb{D}$, we denote by $\text{Imp}(p)$ the impression and by $\prod(p)$ the principal set.

In the sequel, we consider the continuum $K=\widehat{\bbC}\setminus D$ and relate the prime end impressions of $D$ to  certain fibers of $\overline{R_K}$.

\begin{theorem}\label{fiber-and-imp}
Given $p\in\partial\mathbb{D}$ and $x\in\prod(p)$, we have $\text{Imp}(p)\subset\overline{R_K}[x]$.
\end{theorem}
\begin{proof}
By  Theorem \ref{closure}, it suffices to show that for any $y\in \text{Imp}(p)$ and $0<r<\frac12\rho(x,y)$ the set $K\setminus\left(D_r(x)\cup D_r(y)\right)$ has infinitely many components intersecting $\partial D_r(x)$ and $\partial D_r(y)$ both.

Note that $K\setminus\left(D_r(x)\cup D_r(y)\right)=K\cap A_r$, where  $A_r=\widehat{\bbC}\setminus\left(D_r(x)\cup D_r(y)\right)$ is a closed annulus. Therefore, by \cite[Lemma 3.8]{LLY-2019}, we only need to show that $A_r\setminus K$ has infinitely many components intersecting $\partial D_r(x)$ and $\partial D_r(y)$ both.

Since $x\in \prod(p)$, by \cite[(4.9)]{UrsellYoung51} we can fix a chain of crosscuts $\{q_n\}$, whose closures are concentric circular arcs with center $x$ and radius $r_n\rightarrow0$, such that $\{q_n\}$ is crossed by every path possessing the prime end corresponding to $p$.
By \cite[(4.8)]{UrsellYoung51}, we may further fix a half-open path $\alpha=\alpha(t)(0\leq t<1)$ which crosses the chain $\{q_n\}$ and has limit set $\text{Imp}(p)$.
Here we may assume with no loss of generality that  $q_n\subset D_r(x)$ for all $n\ge1$.
Notice that $\text{Imp}(p)=\bigcap \overline{F_n}$, where  $F_n$ denotes the shadow of $q_n$. Let $E_n$ be the other component of $D\setminus q_n$.

Now we fix $t_1\in(0,1)$ with $x_1=\alpha(t_1)\in D_r(x)$ and consider $\alpha_1=\alpha(t)(t_1\leq t<1)$, which is a half-open path intersecting $D_r(y)$. Let $b_1>t_1$ be the minimal number with $y_1'=\alpha(b_1)\in\partial D_r(y)$ and   $a_1\in(t_1,b_1)$ the maximal number with $x_1'=\alpha(a_1)\in\partial D_r(x)$. Let $n_1\ge1$ be the minimal integer such that $\beta_1=\alpha([a_1,b_1])\subset E_{n_1}$.
See Figure \ref{fiber-impression-1}.
\begin{figure}[ht]
\vspace{-0.382cm}
\begin{center}
\begin{tikzpicture}[scale=1.382, x=0.6cm,y=0.6cm]
\draw[fill,gray!38.2]  (-7.5,1.5) circle (1.75);
\draw[gray] (-7.5,1.5) circle (1.76);
\draw[fill,gray!38.2]  (3.5,1.5) circle (1.75);
\draw[gray](3.5,1.5)circle (1.76);

\draw[very thick, purple]  plot[smooth, tension=.7] coordinates {(-7.6,2.6) (-2,4.2) (3.35,3.25)};
\draw[very thick, purple]  plot[smooth, tension=.7] coordinates {(-7.0,2.0) (-2,3.0) (2.0,2.4)};
%
\draw[fill]  (-7.6,2.6) circle (0.1); \draw[fill]  (-6.6,3.0) circle (0.1);
\draw[fill]  (3.35,3.25) circle (0.1);
\node[] at (-8.0,2.6) {$x_1$}; \node[] at (-6.65,3.4) {$x_1'$}; \node[] at (3.5,3.65) {$y_1'$};
\draw[fill]  (-7.0,2.0) circle (0.1); \draw[fill]  (-5.93,2.28) circle (0.1);
\draw[fill]  (2.0,2.4) circle (0.1);
\node[] at (-7.5,2.0) {$x_2$}; \node[] at (-5.5,2.0) {$x_2'$}; \node[] at (2.2,2.1) {$y_2'$};
\node[very thick, purple] at (-2.0,3.8) {$\beta_1$};
\node[very thick, purple] at (-2.0,2.6) {$\beta_2$};

\node at (-10,1.5) {$D_r(x)$};
\node at (6,1.5) {$D_r(y)$};

\node[very thick, purple] at (-2,1.9) {\LARGE$\vdots$};
\node[very thick, purple] at (-2,1.3) {\LARGE$\vdots$};

\end{tikzpicture}
\vskip -0.382cm
\caption{Relative locations of $\beta_j\subset F_{n_j}\subset E_{n_{j+1}}$  and the points $x_j, x_j',y_j'$.}\label{fiber-impression-0}
\end{center}
\vskip -0.25cm
\end{figure}
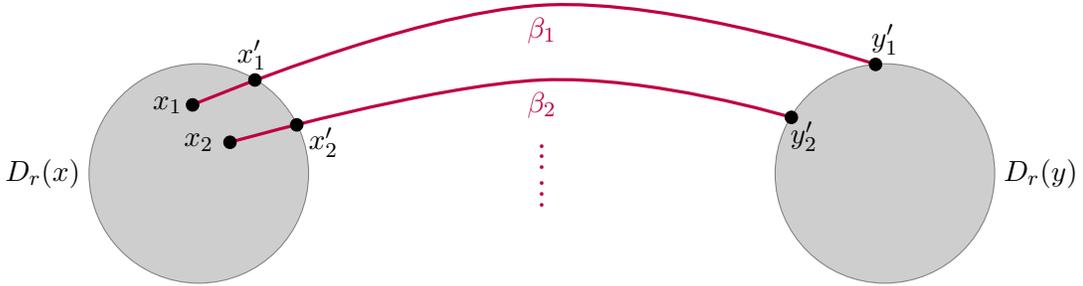

Let $n_0=0$ and $F_{n_0}=D$.

Suppose that $n_j$ and $\alpha_j\supset\beta_j\supset\{x_j',y_j'\}$, with $x_j'=\alpha(a_j)$ and $y_j'=\alpha(b_j)$,  have been chosen for $j\in\{1,\ldots,k\}$ such that $\beta_{j}$ is contained in $F_{n_{j-1}}\cap E_{n_{j}}$ for $1\le j\le k$. We may continue to find $\alpha_{k+1}\supset \beta_{k+1}\supset \{x_{k+1}',y_{k+1}'\}$ and $n_{k+1}$ in the following way.  First,  pick $t_{k+1}\in(b_k,1)$ with $x_{k+1}=\alpha(t_{k+1})\in D_r(x)$ such that  $\alpha_{k+1}=\alpha(t)(t_{k+1}\leq t<1)$ is a half-open path contained in $F_{n_k}$. This is doable, since $\beta_k=\alpha([a_k,b_k])\subset E_{n_k}$. Second, as $\alpha_{k+1}$ intersects $D_r(y)$, we further choose $b_{k+1}\in(t_{k+1},1)$ to be the minimal number with $y_{k+1}'=\alpha(b_{k+1})\in\partial D_r(y)$ and $a_{k+1}\in(t_{k+1},b_{k+1})$ the maximal one with $x_{k+1}'=\alpha(a_{k+1})\in\partial D_r(x)$. Let $n_{k+1}$ be the minimal integer such that $\beta_{k+1}=\alpha([a_{k+1},b_{k+1}])$ is contained in $E_{n_{k+1}}$. Clearly, $n_{k+1}>n_k$.

Inductively, we can find a strictly increasing  sequence of integers $\{n_j: j\ge1\}$ and an infinite sequence of disjoint paths $\{\beta_j\}$, with $\beta_j=\alpha([a_j,b_j])$, such that  $\beta_{j+1}$ is contained in $F_{n_j}\cap E_{n_{j+k}}$ for all $j,k\ge1$. Recall that $F_{n_j}\subset F_{n_i}$ for all $i<j$ and $q_{n_j}\subset D_r(x)$ always holds. It follows that no component of $A_r\setminus K$ contains $\beta_i$ and $\beta_j$ both, since every component of $A_r\setminus K$ is a subset of $D$ and since every arc $\gamma\subset D$ connecting $\beta_i$ to $\beta_j$ necessarily intersects $q_{n_j}$. Therefore, $A_r\setminus K$ has  infinitely many components that intersect  $\partial D_r(x)$ and $\partial D_r(y)$ both.
\end{proof}

\begin{example}\label{fiber-and-imp-0}
Let $D\subset\Delta=\{u+v\textbf{i}: 0<u, v<1\}$ be given as in \cite[p.4, Fig.6]{UrsellYoung51}, considered as a subset of $\widehat{\bbC}$. Let $K=\widehat{\bbC}\setminus D$. Then $\partial D$ is the union of $\partial\Delta$ with the segments
\[A_n=\left\{u+v\textbf{i}: u=\frac{1}{2^{2n-1}}, 0\le v\le \frac34\right\} \quad
\text{and}\quad
B_n=\left\{u+v\textbf{i}: u=\frac{1}{2^{2n}}, \frac14\le v\le 1\right\}.
\]
See Figure \ref{fiber-impression-1}.
 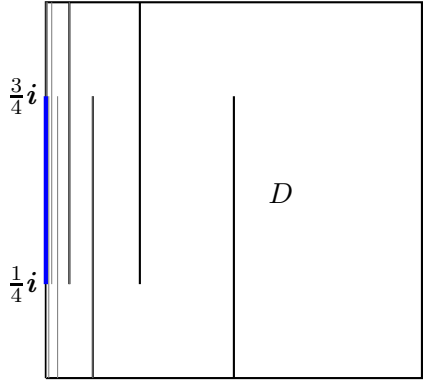
\begin{figure}[ht] \vskip -0.5cm \begin{center}
\begin{tikzpicture}[scale=1.25,x=1cm,y=1cm]
%
\draw[thick](0,0)--(4,0)--(4,4)--(0,4)--(0,0);
\draw[black,thick] (2,0) --(2,3); \draw[black,thick] (1,4) --(1,1);
\draw[black,thick] (1/2,0) --(1/2,3); \draw[black,thick] (1/4,4) --(1/4,1);
\foreach \i in {2,8,32,128}
{\draw[gray,thin] (1/\i,0) --(1/\i,3); \draw[gray,thin] (0.5/\i,4) --(0.5/\i,1);
}
\draw[blue,ultra thick] (0,1) --(0,3);
\draw  (2.5,1.75) node[above]{$D$};
\draw  (0,1) node[left]{$\frac14\textbf{i}$};
\draw  (0,3) node[left]{$\frac34\textbf{i}$};
\end{tikzpicture} \end{center} \vskip -0.5cm
\caption{A simple depiction of $D$ and the segments $A_n, B_n$.}\label{fiber-impression-1} \vskip -0.25cm \end{figure}
Notice that there is a prime end whose impression equals $\{t\textbf{i}: 0\le t\le1\}$. If $x=t\textbf{i}$ and $t\in[0,\frac14)$ then $\overline{R_K}[x]=\{s\textbf{i}: 0\le s\le \frac34\}$. Similarly, if $x=t\textbf{i}$ and $t\in(\frac34,1]$ then $\overline{R_K}[x]=\{s\textbf{i}: \frac14\le s\le 1\}$. In both cases, $x$ is not a principal point and $\text{Imp}(p)\subset\overline{R_K}[x]$ does not hold. However, every $x=t\textbf{i}$ with $t\in[\frac14,\frac34]$ is a principal point and $\text{Imp}(p)=\overline{R_K}[x]$.
\end{example}

\section{Branched coverings $f: \widehat{\bbC}\rightarrow\widehat{\bbC}$ send atoms of $f^{-1}(K)$ onto atoms of $K$}
This section gives the proofs for Theorem \ref{invariant-fiber} and Theorem \ref{invariance}.
In the sequel, $K\subset\widehat{\bbC}$ is assumed to be a compactum and  $f: \widehat{\bbC}\rightarrow\widehat{\bbC}$ a branched covering. Moreover, we set $L=f^{-1}(K)$ and let
$f^{-1}\left(\Dc_K^{PC}\right)$ denote the collection of all those continua $N$ that is a component of $f^{-1}(d)$ for some $d\in\Dc_K^{PC}$. By \cite[p.44, Theorem 3.21]{Nadler92},  $\left\{f^{-1}(d): d\in\Dc_K^{PC}\right\}$ is an usc decomposition of $L$. By \cite[p.278, Lemma 13.2]{Nadler92}, $f^{-1}\left(\Dc_K^{PC}\right)$ is an usc decomposition into subcontinua.
We respectively denote by $\sim_K$ and $\sim_L$ the Sch\"onflies equivalences on $K$ and $L$.

We first obtain the following.

\begin{theorem}\label{fiber-into}
 The containment $f\left(\overline{R_L}[x]\right)\subset\overline{R_{K}}[f(x)]$ holds for all $x\in L$. Consequently,  every atom of $L$ is sent by $f$ {\bf into} an atom of $K$.
\end{theorem}

\begin{proof}
Fix $x\ne y\in L$ with $(x,y)\in\overline{R_{L}}$.
If on the contrary $a=f(x)$ and $b=f(y)$ were not related under $\overline{R_K}$, there would exist some $r>0$ such that the closures of $D_r(a)$ and $D_r(b)$ are disjoint and that $K\setminus\left(D_r(a)\cup D_r(b)\right)$ has at most finitely many components intersecting $\overline{D_r(a)}$ and $\overline{D_r(b)}$ both, say $N_1,\ldots, N_l$.

Choose  $\epsilon>0$ with $\overline{D_\epsilon(x)}\subset f^{-1}\left(D_r(a)\right)$ and $\overline{D_\epsilon(y)}\subset f^{-1}\left(D_r(b)\right)$. Then $\overline{D_\epsilon(x)}\cap \overline{D_\epsilon(y)}=\emptyset$ and $L\setminus\left(D_\epsilon(x)\cup D_\epsilon(y)\right)$ has infinitely many components that intersect $\overline{D_\epsilon(x)}$ and $\overline{D_\epsilon(y)}$ both.
Denote these components as $P_1,P_2,\ldots$.
By the Cut Wire Theorem \cite[p.72, Theorem 5.2]{Nadler92}, the difference $P_i\setminus\left[f^{-1}(D_r(a))\cup f^{-1}(D_r(b))\right]$ for each $i\ge1$ has a component $Q_i$ that intersects $f^{-1}\left(\overline{D_r(a)}\right)$ and $f^{-1}\left(\overline{D_r(b)}\right)$ both. Note that $Q_i$ is also a component of
\[X_r=L\setminus\left[f^{-1}(D_r(a))\cup f^{-1}(D_r(b))\right]=f^{-1}\left[K\setminus(D_r(a)\cup D_r(b))\right].
\]
Clearly, for all $i\ge1$ the image $f(Q_i)$ is a continuum and is contained in one of the continua $N_1,\ldots, N_l$. Thus every $Q_i$ is a component of the inverse $f^{-1}(N)$ for some component $N$ of $K\setminus(D_r(a)\cup D_r(b))$. Note that $N=f(Q_i)$, since $f$ is a branched covering. Also note that $N$ intersects $\overline{D_r(a)}$ and $\overline{D_r(b)}$ both hence coincides with one of $N_1,\ldots, N_l$. As $f$ is a branched covering, there exists $1\le j\le l$ with $f(Q_i)=N_j$. Therefore, there is some $N_j$ such that $f(Q_i)=N_j$ for infinitely many $i$. This is absurd, since $f^{-1}(z)$ is a finite set for all $z\in\widehat{\bbC}\setminus B_f$, where the branch set $B_f$ is finite.
\end{proof}

Then we deal with Theorem \ref{invariant-fiber}. By Theorem \ref{fiber-into}, it suffices to obtain the following.

\begin{theorem}\label{fiber-onto}
The containment $f\left(\overline{R_L}[x]\right)\supset\overline{R_K}[f(x)]$ holds for $x\in L$.
\end{theorem}

Recall that an {\bf elementary region $W(J_1,\ldots,J_k)$} \cite[p.38]{Whyburn64} means a domain in the plane that is bounded by $k\ge2$ disjoint Jordan curves $J_1,\ldots,J_k$, such that every $J_i$ is a component of $\partial W$. In Theorem \ref{good-R}, we will use elementary regions to induce a criterion for two points $x\ne y\in K$ to be related under $R_K$. This criterion is useful when we prove Theorem \ref{fiber-onto}.

\begin{theorem}\label{good-R}
Two points $x\ne y\in K$ are related under $R_K$ if and only if there is an elementary region $W=W(J_1,\ldots,J_k)$ with the following  {\bf$(\star)$-condition}: there are two integers, say $i_1=1$ and $i_2=2$, such that $\overline{W}\cap K$ has infinitely many components $P_n$ intersecting both $J_1$ and $J_2$ and the limit $P_\infty=\lim\limits_{n\rightarrow\infty}P_n$ under the Hausdorff distance satisfies $x\in(J_1\cap P_\infty)$ and $y\in(J_2\cap P_\infty)$.
\end{theorem}
\begin{proof}
Assume that $k\ge3$. Let $U=U(J_1,J_2)$ be the annulus bounded by $J_1$ and $J_2$, which necessarily contains each of $J_3,\ldots,J_k$.
Let $V_i(1\le i\le k)$ be the component of $\widehat{\bbC}\setminus\overline{W}$ with $\partial V_i=J_i$. Assume that $\infty\in V_1$ and call a domain $V\subset\widehat{\bbC}$ {\bf bounded} if it does not contain $\infty$.

If no $J_i$ with $i\ge3$ can be connected to $J_1\cup J_2$ by a simple arc $\alpha\subset (W\setminus K)$, then one can infer that every $V_i$ with $i\ge3$ is contained in a bounded component of $\widehat{\bbC}\setminus P$ for some component $P$ of $\overline{W}\cap K$. Let $K^*=\bigcup_{i=3}^k\overline{V_i}$. Then, all but $k-2$ of the continua $P_n$ are disjoint from $K^*$ thus are components of $\overline{W}\cap (K\cup K^*)$. Each of those $P_n$, with $P_n\cap K^*=\emptyset$, is a subset of $\overline{U(J_1,J_2)}\cap K$ and hence a component of $\overline{U(J_1,J_2)}\cap K$.
This indicates that $(x,y)\in R_K$.

In the sequel we suppose that one of the curves $J_i$ with $i\ge3$, say $J_k$, can be connected to $J_1\cup J_2$ by an arc $\alpha\subset (W\setminus K)$. With no loss of generality, we may assume that $\alpha$ has one endpoint on $J_2$ and the other on $J_k$. Notice that $\alpha$ may be thickened to a Jordan domain $D\subset(W\setminus K)$ whose closure does not intersects $K$. Moreover, the boundary $\partial D$ consists of two arcs $\alpha', \alpha''\subset (W\setminus K)$, together with one arc on $J_k$ and another on $J_2$.
Now we can construct a Jordan curve $J_2'\subset(J_2\cup J_k\cup\alpha'\cup\alpha'')$ and a new elementary region $W_1$ bounded by $k-1$ disjoint Jordan curves $J_1, J_2', J_3,\ldots,J_{k-1}$. See Figure \ref{tunnel} for the region bounded by $J_2'$.
\begin{figure}[ht]
\vspace{-0.2cm}
\begin{center}
\begin{tikzpicture}[scale=0.55]
\footnotesize
 \pgfmathsetmacro{\xone}{0}
 \pgfmathsetmacro{\xtwo}{21}
 \pgfmathsetmacro{\yone}{0}
 \pgfmathsetmacro{\ytwo}{10}

\foreach \p in {0.05,0.10,...,5.95}
    \draw[green,very thin] (\xtwo/7+\p,\ytwo/7) -- (\xtwo/7+\p,6*\ytwo/7);

\foreach \p in {0.05,0.10,...,5.95}
    \draw[green,very thin] (4*\xtwo/7+\p,\ytwo/7) -- (4*\xtwo/7+\p,6*\ytwo/7);

\foreach \p in {0.05,0.10,...,2.95}
    \draw[green,very thin] (3*\xtwo/7+\p,3.8*\ytwo/7) -- (3*\xtwo/7+\p,4.2*\ytwo/7);

\draw[gray,very thick] (\xone,\yone) -- (\xtwo,\yone) --  (\xtwo,\ytwo) --
 (\xone,\ytwo) --  (\xone,\yone);

\draw[gray,very thick] (\xone+\xtwo/7,\yone+\ytwo/7) -- (\xone+3*\xtwo/7,\yone+\ytwo/7) --  (\xone+3*\xtwo/7,\yone+6*\ytwo/7) --
(\xone+\xtwo/7,\yone+6*\ytwo/7) -- (\xone+\xtwo/7,\yone+\ytwo/7);

\draw[gray,very thick] (\xone+4*\xtwo/7,\yone+\ytwo/7) -- (\xone+6*\xtwo/7,\yone+\ytwo/7) --  (\xone+6*\xtwo/7,\yone+6*\ytwo/7) --
(\xone+4*\xtwo/7,\yone+6*\ytwo/7) -- (\xone+4*\xtwo/7,\yone+\ytwo/7);

\draw[blue,very thick] (\xone+4*\xtwo/7,\yone+4*\ytwo/7) -- (\xone+3*\xtwo/7,\yone+4*\ytwo/7);

\draw[red,very thick] (\xone+4*\xtwo/7,\yone+4.2*\ytwo/7) -- (\xone+3*\xtwo/7,\yone+4.2*\ytwo/7);

\draw[red,very thick] (\xone+4*\xtwo/7,\yone+3.8*\ytwo/7) -- (\xone+3*\xtwo/7,\yone+3.8*\ytwo/7);

 \draw[blue] (\xone,\ytwo) node[anchor=north west] {$J_1$};
 \draw[blue] (1.2*\xtwo/7,\ytwo/7) node[anchor=north east] {$J_2$};
 \draw[blue] (4.2*\xtwo/7,\ytwo/7) node[anchor=north east] {$J_k$};
 \draw[red] (3.5*\xtwo/7,3.85*\ytwo/7) node[anchor=north] {$\alpha'$};
 \draw[red] (3.5*\xtwo/7,4.2*\ytwo/7) node[anchor=south] {$\alpha''$};

\end{tikzpicture}
\end{center}
\vskip -0.5cm
\caption{Relative location of $J_1, J_2, J_k, \alpha'$ and $\alpha''$ along with $W^*$.}\label{tunnel}
\vskip -0.25cm
\end{figure}
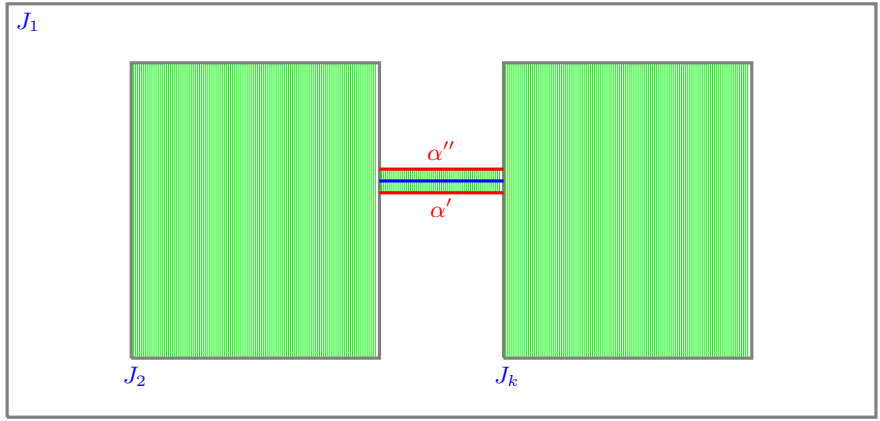

Note that all but finitely many of the above continua $P_n$ are also components of $\overline{W_1}\cap K$.
Also note that $W_1$ is again an elementary region, bounded by $k-1$ disjoint Jordan curves,  that satisfies the $(\star)$-condition. Repeating this for at most $k-2$ times, we will obtain an annulus bounded by two disjoint Jordan curves that satisfies the $(\star)$-condition. We are done. \end{proof}


\begin{proof}[{\bf Proof of Theorem \ref{fiber-onto}}]
Set $a=f(x)$. Given $b\in\overline{R_K}[a]$, we will find $y\in\overline{R_L}[x]$ with $f(y)=b$.

Since $f^{-1}(\{a,b\})$ is a finite set, we may fix $\epsilon>0$ such that the closures of
$D_\epsilon(u)$ and $D_\epsilon(v)$ with  $u\ne v\in f^{-1}(\{a,b\})$ are disjoint. Since $f$ is a branched covering,  we may further fix $r>0$ and a sequence $\{r_n: n\ge2\}$ with $0<r_n<\frac{r}{n}$ such that for any $z\in\{a,b\}$ the next two properties are satisfied:
(1) each component of $f^{-1}\left(\overline{D_{r_n}(z)}\right)$  lies in one of the disks $D_{\epsilon/n}(u)$ with $f(u)=z$; (2)  each component of $f^{-1}\left(\partial D_{r_n}(z)\right)$ is a Jordan curve.
Let $U_n(n\ge2)$ be the component of $f^{-1}\left(\overline{D_{r_n}(a)}\right)$ that contains $x$. Then $\displaystyle W_n=\widehat{\bbC}\setminus\left(f^{-1}\left(\overline{D_{r_n}(a)}\right)\cup f^{-1}\left(\overline{D_{r_n}(b)}\right)\right)$ is an elementary region bounded by finitely many disjoint Jordan curves, one of which is $\partial U_n$.

Given $n\ge1$, we know that $K\setminus\left(D_{r_n}(a)\cup D_{r_n}(b)\right)$ has infinitely many components, say $\{P_{n,k}: k\ge1\}$, that intersect  $\partial D_{r_n}(a)$ and $\partial D_{r_n}(b)$ both.
Every $f^{-1}\left(P_{n,k}\right)$ has at least one component, say $M_{n,k}$, such that  $M_{n,k}\cap U_n\ne\emptyset$. Clearly, $M_{n,k}$ intersects one component of $f^{-1}\left(\overline{D_{r_n}(b)}\right)$, say $B_{n,k}$, that contains a point $y_{n,k}\in f^{-1}(b)$.
As $f^{-1}(b)$ is a finite set,  there exists $k_n\ge1$ and some $y_n\in f^{-1}(b)$ such that $y_{n,k}=y_n$ and $B_{n,k}=B_{n,k_n}$ for infinitely many $k\ge1$. Here $y_n\in B_{n,k_n}$. Going to an appropriate subsequence, we may assume that $y_{n,k}\equiv y_n(k\ge1)$ and that $\lim\limits_{k\rightarrow\infty} M_{n,k}=M_{n,\infty}$ under the Hausdorff distance.

By applying Theorem \ref{good-R} to the elementary region $W_n$,  we can find   $u_n\in\left(\overline{U_n}\cap M_{n,\infty}\right)$ and $v_n\in\left(B_{n,k_n}\cap M_{n,\infty}\right)$ with $(u_n,v_n)\in R_L$. Here we have $y_n,v_n\in B_{n,k_n}$. Going to an appropriate subsequence, we may assume that $y_n\equiv y(n\ge1)$ for some $y\in f^{-1}(b)$. Then $\lim\limits_{n\rightarrow\infty}u_n=x$ and $\lim\limits_{n\rightarrow\infty}v_n=y$. Moreover, we have $(x,y)\in\overline{R_L}$ and $f(y)=\lim\limits_{n\rightarrow\infty}f(v_n)=b$.
\end{proof}


Now we have all the ingredients to prove Theorem \ref{invariance} as follows.
\begin{proof}[{\bf Proof for Theorem \ref{invariance}}]
Using the closed equivalence $\sim_L$, we define an equivalence  $\approx$ on $K$, such that  $x,y\in K$ are related under $\approx$ if and only if $\pi_L\left(f^{-1}(x)\right)=\pi_L\left(f^{-1}(y)\right)$.
This happens if and only if the union of all the classes  $[u]_{\sim_L}$ of $\sim_L$ with $f(u)=x$ equals the union of all the classes  $[v]_{\sim_L}$ with $f(v)=y$.

There are two observations. First, by Theorem \ref{fiber-onto} we can easily check that $\approx$ contains $\overline{R_K}$. Second, by the upper semi-continuity of $\sim_L$ and the openness of $f$, we can further show that $\approx$ is a closed equivalence hence contains $\sim_K$.
Actually, if $(x,y)\notin\approx$  then $\pi_L\left(f^{-1}(x)\right)\ne \pi_L\left(f^{-1}(y)\right)$. Without losing generality, we may assume that  $\pi_L(u_0)\notin\pi_L\left(f^{-1}(y)\right)$ for some $u_0\in f^{-1}(x)$. This implies that $[u_0]_{\sim_L}$ is disjoint from all the classes of $\sim_L$ that intersects $f^{-1}(y)$. Since $\sim_L$ is an usc decomposition, we can find disjoint open sets $U\ni u_0$ and $V\supset f^{-1}(y)$ both of which are saturated with respect to $\sim_L$, so that every class of $\sim_L$ intersecting $U$ (or $V$) is entirely contained in $U$ (or in $V$). The openness of $f$ then ensures that $f(U)\ni x=f(u_0)$ and $f(V)\ni y$ are disjoint open sets such that $U\times V$ is disjoint from $\approx$. Thus $K\!\times\!K\setminus\approx$ \ is open.

Finally, by Theorem \ref{fiber-into}, the containment $\left[f(u)\right]_{\sim_K}\supset f\left([u]_{\sim_L}\right)$ holds for all $u\in L$. On the other hand, if $y\in [f(u)]_{\sim_K}$ then  $f(u)\approx y$ and we can find $w\in f^{-1}(y)$ with $[w]_{\sim_L}=[u]_{\sim_L}$. It follows that $y\in f\left([u]_{\sim_L}\right)$. By the flexibility of $y$ we immediately have $[f(u)]_{\sim_K}\subset f\left([u]_{\sim_L}\right)$. This establishes Theorem \ref{invariance}.
\end{proof}

\begin{remark}
Let $\phi(x)=\pi_K\circ f(x)$ for all $x\in L$, where $\pi_K:K\rightarrow\Dc_K^{PC}$ is the Peano projection.
Let $\phi_1:L\rightarrow f^{-1}\left(\Dc_K^{PC}\right)$ be the natural projection and  $\phi_2(N)=d$ for all $N\in f^{-1}\left(\Dc_K^{PC}\right)$, where $d\in\Dc_K^{PC}$ is the unique element such that $N$ is a component of $f^{-1}(d)$. Then $\phi_1$ is monotone and $\phi_2$ is finite-to-one.
By \cite[p.141, Factor Theorem (4.1)]{Whyburn42}, $\phi=\phi_2\circ \phi_1$ is the unique {\bf monotone-light factorization} of $\phi: L\rightarrow \Dc_K^{PC}$.
By Theorem \ref{invariance},  $\phi_1$ is topologically equivalent to $\pi_L$ and $\phi_2$ is a finite-to-one map from the Peano model of $L$ onto that of $K$.
\end{remark}

\section{Examples}

This section gives some planar compacta $K$ for which the core decompositions $\mathcal{D}_K^{PC}$ are known. For those $K$, we demonstrate how the fibers  $\overline{R_K}[x]$  are connected to the fibers of $\sim_K$, which are elements of $\mathcal{D}_K^{PC}$ and are called atoms of $K$.

Hereafter, let  $\mathcal{C}$ denote the Cantor's ternary set.
Example \ref{invariance-destroyed} shows that $L$ in Theorem \ref{invariance} can not be replaced by any of its proper subsets.

\begin{example}\label{invariance-destroyed}
Let $A=\{re^{{\bf i}\theta}:1\leq r\leq 2,\pi\leq\theta\leq 2\pi\}$ and $L=\mathcal{K}_0\cup A$, where $\mathcal{K}_0=\left\{(1+r)e^{{\bf i}\theta}:\ r\in \mathcal{C},0\leq \theta\leq \pi\right\}$.
See Figure \ref{Exp1}.
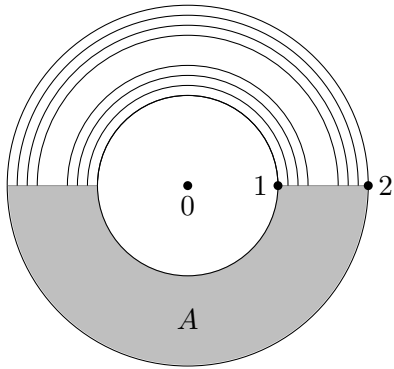
\begin{figure}[ht]
\begin{tikzpicture}[scale=1.2]
\fill[gray!50] (0,0)  circle (12.0ex);
\draw[gray!80,very thin] (-2,0) -- (2,0);
\fill[white] (0,0)  circle (6.0ex);
\fill[white] (-2.1,0.01)--(2.1,0.01)--(2.1,2)--(-2.1,2)--(-2.1,0.01);

\fill[black] (0,0) node[below]{$0$} circle (0.3ex);
\fill[black] (1,0) node[left]{$1$} circle (0.3ex);
\fill[black] (2,0) node[right]{$2$} circle (0.3ex);
\draw (0,-1.25) node[below] {$A$};


\draw (2,0) arc (0:-180:2 cm);
\draw (1,0) arc (0:-180:1 cm);

\draw (1,0) arc (0:180:1 cm);

\draw (1,0) arc (0:180:1 cm);
\draw (2,0) arc (0:180:2 cm);
\draw (1.333,0) arc (0:180:1.333 cm);
\draw (1.667,0) arc (0:180:1.667 cm);


\draw (1.111,0) arc (0:180:1.111 cm);
\draw (1.222,0) arc (0:180:1.222 cm);
\draw (1.778,0) arc (0:180:1.778 cm);
\draw (1.889,0) arc (0:180:1.889 cm);


\end{tikzpicture}
\vskip -0.382cm
\caption{A simple depiction of $\mathcal{K}_0$, $A$ and $L$.}\label{Exp1}
\end{figure}
Let $f(z)=z^2$. Then $K=f(L)$ is just the closed annulus $\{z: 2\le|z|\le4\}$. Moreover, we shall have $L \subsetneqq f^{-1}(K)$ and  $\overline{R_L}=\sim_L$. Note that every atom of $L$ is either a semi-circle contained in $\mathcal{K}_0$ or a singleton in $A\setminus\mathcal{K}_0$, while every atom of $K$ is a singleton. Therefore, $L$ has uncountably non-degenerate atoms $d$, each of which is a semi-circle contained in $\mathcal{K}_0$, such that $f(d)$ consists of uncountably many atoms of $K$.
\end{example}


Example \ref{inverse_of_fibers} is related to Theorem \ref{invariant-fiber}.

\begin{example}\label{inverse_of_fibers}
Let $\alpha_k=\left\{t_ke^{{\bf i}\theta}:0\leq \theta\leq\pi\right\}$ and $\beta_k=\left\{s_ke^{{\bf i}\theta}:\pi\leq \theta\leq2\pi\right\}$  for all  $k\geq0$, where $t_k=4^{\frac{1}{2^k}}$ and $s_k=\frac12\left(t_k+t_{k+1}\right)$. Then $K=\overline{\bigcup_{k\ge0}(\alpha_k\cup\beta_k)}\cup[1,4]$ is a continuum containing  the unit circle $\partial\mathbb{D}$. Let $L=f^{-1}(K)$ with $f(z)=z^2$. See  Figure \ref{Exp2}, in which the end points of $\alpha_k$ (respectively, of $\beta_k$) are marked by $\pm z_k$ (respectively, $\pm w_k$). Then $\overline{R_K}[1]=\partial\mathbb{D}=f^{-1}\left(\overline{R_K}[1]\right)$. However, $\overline{R_L}[z]\subsetneqq\partial\mathbb{D}$ for each $z\in \partial\mathbb{D}$.
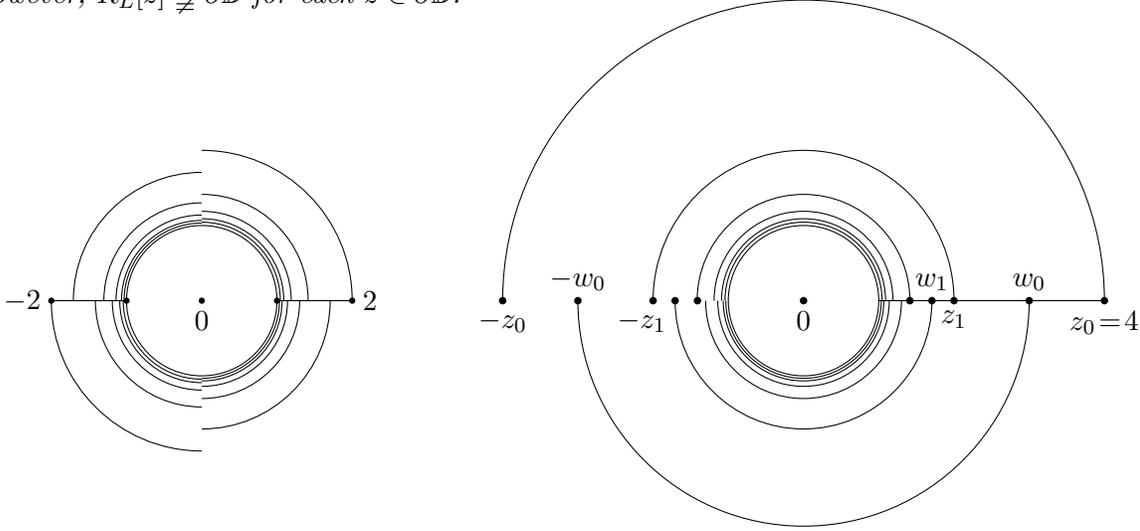
\begin{figure}[ht]
\vspace{-0.618cm}
\begin{center}
\begin{tikzpicture}[x=1cm,y=1cm,scale=1.0]
\draw (1,0)->(2,0);
\draw (-1,0)->(-2,0);
\draw (0,0)circle(1);

\foreach \i in {1,...,5}
 \pgfmathsetmacro{\j}{2^\i}
  \pgfmathsetmacro{\s}{1/\j}
  \pgfmathsetmacro{\r}{4^\s}
\draw (\r,0) arc (0:90:\r cm);

\foreach \i in {1,...,5}
 \pgfmathsetmacro{\j}{2^\i}
  \pgfmathsetmacro{\s}{1/\j}
  \pgfmathsetmacro{\r}{4^\s}
\draw (-\r,0) arc (180:270:\r cm);

\foreach \i in {1,...,5}
  \pgfmathsetmacro{\j}{2^\i}
   \pgfmathsetmacro{\s}{1/\j}
   \pgfmathsetmacro{\r}{4^\s}
   \pgfmathsetmacro{\p}{4^(\s/2)}
  \pgfmathsetmacro{\t}{(\r+\p)/2}
\draw (0,\t) arc (90:180:\t cm);

\foreach \i in {1,...,5}
  \pgfmathsetmacro{\j}{2^\i}
   \pgfmathsetmacro{\s}{1/\j}
   \pgfmathsetmacro{\r}{4^\s}
   \pgfmathsetmacro{\p}{4^(\s/2)}
  \pgfmathsetmacro{\t}{(\r+\p)/2}
\draw (0,-\t) arc (270:360:\t cm);

\fill[black] (0,0) node[below]{$0$} circle (0.25ex);
\fill[black] (1,0)  circle (0.25ex); 
\fill[black] (2,0) node[right]{$2$} circle (0.25ex);
\fill[black] (-1,0)  circle (0.25ex); 
\fill[black] (-2,0) node[left]{$-2$} circle (0.25ex);


\draw (9,0)->(12,0);
\draw (8,0)circle(1);

\foreach \i in {0,...,5}
 \pgfmathsetmacro{\j}{2^\i}
  \pgfmathsetmacro{\s}{1/\j}
  \pgfmathsetmacro{\r}{4^\s}
\draw (8+\r,0) arc (0:180:\r cm);

\foreach \i in {0,...,5}
  \pgfmathsetmacro{\j}{2^\i}
   \pgfmathsetmacro{\s}{1/\j}
   \pgfmathsetmacro{\r}{4^\s}
   \pgfmathsetmacro{\p}{4^(\s/2)}
  \pgfmathsetmacro{\t}{(\r+\p)/2}
\draw (8+\t,0) arc (0:-180:\t cm);

\fill[black] (4,0) node[below]{$-z_0$} circle (0.3ex);
\fill[black] (12,0) node[below]{$z_0\!=\!4$} circle (0.3ex);
\fill[black] (10,0) node[below]{$z_1$} circle (0.3ex);
\fill[black] (6,0)  circle (0.3ex);
\fill[black] (5.85,0.0) node[below]{$-z_1$};
\fill[black] (9.414,0) circle (0.3ex); 
\fill[black] (6.59,0) circle (0.3ex);
\fill[black] (11,0) node[above]{$w_0$} circle (0.3ex);
\fill[black] (5,0) node[above]{$-w_0$} circle (0.3ex);
\fill[black] (9.707,0) node[above]{$w_1$} circle (0.3ex);
\fill[black] (6.293,0) circle (0.3ex);
\fill[black] (8,0) node[below]{$0$} circle (0.3ex);
\end{tikzpicture}

\end{center}
\vskip -0.618cm
\caption{A finite approximation of $L$ (left) and $K$ (right).}\label{Exp2}
\end{figure}
\end{example}

In the next five examples we give  special compacta $K$ and compare the equivalence $\sim_K$ with the iterates $\left(\overline{R_K}\right)^n(n\ge1)$. Here two points $x,y\in K$ are related under the $n$-th iterate $\left(\overline{R_K}\right)^n$ if and only if there exist $n+1$ points $z_1=x,z_2,\ldots,z_n,z_{n+1}=y$ such that $(z_i,z_{i+1})\in\overline{R_K}$ for $1\le i\le n$.
Each of those compacta $K$ falls into one of the following categories.
\begin{enumerate}
\item  $K$ itself is an atom of $K$ and $\overline{R_K}=\sim_K$. See Example \ref{indecomposable}.
\item  Every component of $K$ is an atom of $K$ and $\overline{R_K}=\sim_K$. See Example \ref{McMullen}.
\item  $K$ itself is an atom of $K$ and $\overline{R_K}\ne\overline{R_K}^2=\sim_K$. See Example \ref{teepee}.
\item $K$ is NOT an atom and $\overline{R_K}^{n+1}\ne \overline{R_K}^{n+2}=\sim_K$ for some $n\ge1$. See Example \ref{rotated_combs}.
\item $K$ has exactly one non-degenerate atom and $\overline{R_K}^n\ne\sim_K$ for all $n\ge1$. See Example \ref{witch_broom}.
\end{enumerate}

\begin{example}\label{indecomposable}
If $K\subset\mathbb{C}$ is an indecomposable continuum then  $\overline{R_K}[x]=K$ for all $x\in K$. Consequently, we have $\mathcal{D}_K^{PC}=\{K\}$ hence there is only one atom, the whole continuum $K$ itself.
\end{example}



\begin{example}\label{McMullen}
By \cite[Proposition 7.2]{McMullen88} we know that for all complex number $\lambda\ne0$ sufficiently small the Julia set $K$ of $f(z)=z^2+\frac{\lambda}{z^3}$ is homeomorphic with $\left\{re^{2\pi{\bf i}\theta}:\ r\in \mathcal{C},0\leq \theta\leq 2\pi\right\}$.
Such a compactum is called a {\bf Cantor set of circles}. Each component of $K$ is an atom.
Further discussions on  rational maps whose Julia sets are a Cantor set of circles can be found in \cite{QYY15}.
\end{example}

\begin{example}\label{teepee}
If $K$ is Cantor's teepee \cite[p.145]{SS-1995} then  $\overline{R_K}[p]=K$ for exactly one point $p\in K$. Moreover, the fiber $\overline{R_K}[x]$ is a simple arc for every  $x\in K\setminus\{p\}$. See  Figure \ref{Teepee_pic}.
\begin{figure}[ht]
\begin{center}\vskip -0.382cm
\begin{tikzpicture}[x=1cm,y=1cm,scale=1.1]
\foreach \i in {0,...,3}
{
\draw[gray,thick] (3.645,2) -- (0.09*\i,0);
\draw[gray,thick] (3.645,2) -- (0.54+0.09*\i,0);
\draw[gray,thick] (3.645,2) -- (1.62+0.09*\i,0);
\draw[gray,thick] (3.645,2) -- (2.16+0.09*\i,0);

\draw[gray,thick] (3.645,2) -- (4.86+0.09*\i,0);
\draw[gray,thick] (3.645,2) -- (5.4+0.09*\i,0);
\draw[gray,thick] (3.645,2) -- (6.48+0.09*\i,0);
\draw[gray,thick] (3.645,2) -- (7.02+0.09*\i,0);
}
\fill[black] (3.645,2) node[above]{$p$} circle (0.3ex);
\end{tikzpicture}
\end{center}\vskip -0.5cm
\caption{A simple representation of Cantor's Teepee.}\label{Teepee_pic}\vskip -0.5cm
\end{figure}
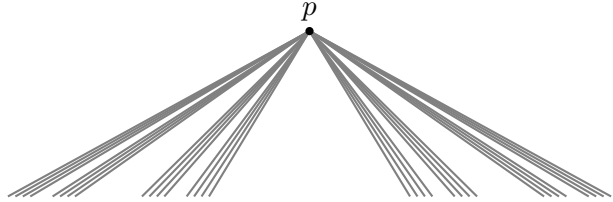
\end{example}



\begin{example}\label{rotated_combs}
Let $K_1$ be the union of  $[0,1]\subset\bbC$ with $A=\left\{t+s\mathbf{i}: t\in\mathcal{C}, s\in[0,1]\right\}$ and $B=\left\{s+t\mathbf{i}: t\in\mathcal{C}, s\in[0,1]\right\}$.
Moreover, for all $n\ge1$ let $K_{2n+1}=\bigcup_{m=0}^n\left(K_1+m+m\textbf{i}\right)$ and $K_{2n}=K_{2n-1}\cup\left(A+n+n\textbf{i}\right)$. Then $\overline{R_{K_n}}^{n+1}\subsetneqq\overline{R_{K_n}}^{n+2}=\sim_{K_n}$ holds for all $n\ge1$. See  Figure \ref{combs}.
\begin{figure}[ht]
\begin{center}
\begin{tabular}{ccc}
\begin{tikzpicture}[x=1cm,y=1cm,scale=0.5]
\draw[thick] (0,0) -- (3,0);
\foreach \i in {0,...,3}
{
    \draw[gray,thick] (3*\i/27,0) -- (3*\i/27,3);
    \draw[gray,thick] (6/9+3*\i/27,0) -- (6/9+3*\i/27,3);
     \draw[gray,thick] (2+3*\i/27,0) -- (2+3*\i/27,3);
    \draw[gray,thick] (2+6/9+3*\i/27,0) -- (2+6/9+3*\i/27,3);
}
\foreach \i in {0,...,3}
{
    \draw[gray,thick] (3,3*\i/27) -- (6,3*\i/27);
    \draw[gray,thick] (3,6/9+3*\i/27) -- (6,6/9+3*\i/27);
     \draw[gray,thick] (3,2+3*\i/27) -- (6,2+3*\i/27);
    \draw[gray,thick] (3,2+6/9+3*\i/27) -- (6,2+6/9+3*\i/27);
}
\fill[black] (0,0) node[below]{$0$} circle (0.5ex);
\fill[black] (3,0) node[below]{$1$} circle (0.5ex);

\end{tikzpicture}
&
\begin{tikzpicture}[x=1cm,y=1cm,scale=0.5]
\draw[thick] (0,0) -- (3,0);
\foreach \i in {0,...,3}
{
    \draw[gray,thick] (3*\i/27,0) -- (3*\i/27,3);
    \draw[gray,thick] (6/9+3*\i/27,0) -- (6/9+3*\i/27,3);
     \draw[gray,thick] (2+3*\i/27,0) -- (2+3*\i/27,3);
    \draw[gray,thick] (2+6/9+3*\i/27,0) -- (2+6/9+3*\i/27,3);
}
\fill[black] (0,0) node[below]{$0$} circle (0.5ex);
\fill[black] (3,0) node[below]{$1$} circle (0.5ex);

\foreach \i in {0,...,3}
{
    \draw[gray,thick] (3,3*\i/27) -- (6,3*\i/27);
    \draw[gray,thick] (3,6/9+3*\i/27) -- (6,6/9+3*\i/27);
     \draw[gray,thick] (3,2+3*\i/27) -- (6,2+3*\i/27);
    \draw[gray,thick] (3,2+6/9+3*\i/27) -- (6,2+6/9+3*\i/27);
}

\foreach \i in {0,...,3}
{
    \draw[gray,thick] (3+3*\i/27,3) -- (3+3*\i/27,6);
    \draw[gray,thick] (3+6/9+3*\i/27,3) -- (3+6/9+3*\i/27,6);
     \draw[gray,thick] (3+2+3*\i/27,3) -- (3+2+3*\i/27,6);
    \draw[gray,thick] (3+2+6/9+3*\i/27,3) -- (3+2+6/9+3*\i/27,6);
}

\end{tikzpicture}
&
\begin{tikzpicture}[x=1cm,y=1cm,scale=0.5]
\draw[thick] (0,0) -- (3,0);
\foreach \i in {0,...,3}
{
    \draw[gray,thick] (3*\i/27,0) -- (3*\i/27,3);
    \draw[gray,thick] (6/9+3*\i/27,0) -- (6/9+3*\i/27,3);
     \draw[gray,thick] (2+3*\i/27,0) -- (2+3*\i/27,3);
    \draw[gray,thick] (2+6/9+3*\i/27,0) -- (2+6/9+3*\i/27,3);
}
\fill[black] (0,0) node[below]{$0$} circle (0.5ex);
\fill[black] (3,0) node[below]{$1$} circle (0.5ex);

\foreach \i in {0,...,3}
{
    \draw[gray,thick] (3,3*\i/27) -- (6,3*\i/27);
    \draw[gray,thick] (3,6/9+3*\i/27) -- (6,6/9+3*\i/27);
     \draw[gray,thick] (3,2+3*\i/27) -- (6,2+3*\i/27);
    \draw[gray,thick] (3,2+6/9+3*\i/27) -- (6,2+6/9+3*\i/27);
}

\foreach \i in {0,...,3}
{
    \draw[gray,thick] (3+3*\i/27,3) -- (3+3*\i/27,6);
    \draw[gray,thick] (3+6/9+3*\i/27,3) -- (3+6/9+3*\i/27,6);
     \draw[gray,thick] (3+2+3*\i/27,3) -- (3+2+3*\i/27,6);
    \draw[gray,thick] (3+2+6/9+3*\i/27,3) -- (3+2+6/9+3*\i/27,6);
}

\foreach \i in {0,...,3}
{
    \draw[gray,thick] (6,3+3*\i/27) -- (9,3+3*\i/27);
    \draw[gray,thick] (6,3+6/9+3*\i/27) -- (9,3+6/9+3*\i/27);
     \draw[gray,thick] (6,3+2+3*\i/27) -- (9,3+2+3*\i/27);
    \draw[gray,thick] (6,3+2+6/9+3*\i/27) -- (9,3+2+6/9+3*\i/27);
}

\end{tikzpicture}
\end{tabular}
\end{center}
\vskip -0.385cm
\caption{A finite approximation of $K_2$ (left), $K_3$ (middle), and $K_4 $ (right).}\label{combs}
\vskip -0.5cm
\end{figure}
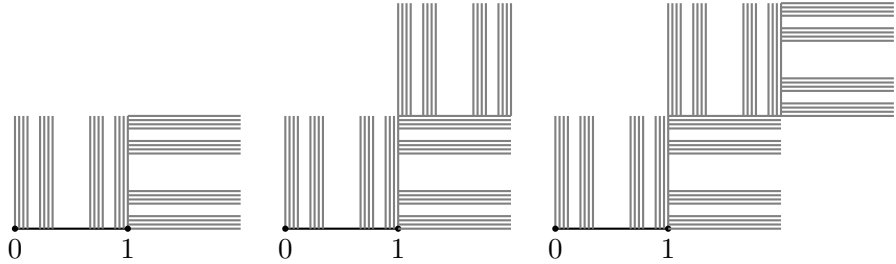
Similar continua are discussed in \cite[Example 2.12]{JLL16} and \cite[Example 7.6]{LL-2018} for different purposes.
\end{example}

\begin{example}\label{witch_broom}
Let $K\subset\mathbb{C}$ be the Witch's Broom \cite[p.84, Figure 5.22]{Nadler92}. See Figure \ref{broom}. Then the segment $[0,1]\subset\mathbb{R}\subset\mathbb{C}$ is the only non-degenerate atom of $K$. We have the following observations:
(1)  $\overline{R_K}[x]=\{x\}$ for all points $x\in K\setminus(0,1]$; (2)  $\overline{R_K}[x]$ is a nondegenerate subarc of $(0,1]$ for all points $x\in (0,1]$, and (3) $\left(\overline{R_K}\right)^n[x]\subsetneqq[0,1]$ for all $x\in [0,1]$ and $n\ge1$.
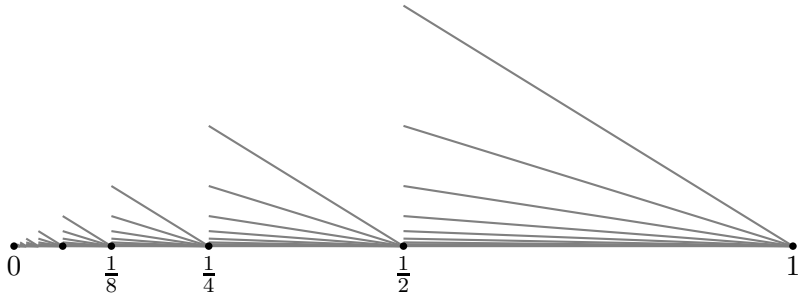
\begin{figure}[ht]
\vspace{-0.cm}
\begin{center}
\begin{tikzpicture}[x=1.618cm,y=1cm,scale=1]

\foreach \i in {0,...,8}
{
\draw[gray,thick] (6.4,0) -- (3.2,3.2/2^\i);
\draw[gray,thick] (3.2,0) -- (1.6,1.6/2^\i);
\draw[gray,thick] (1.6,0) -- (0.8,0.8/2^\i);
\draw[gray,thick] (0.8,0) -- (0.4,0.4/2^\i);
\draw[gray,thick] (0.4,0) -- (0.2,0.2/2^\i);
\draw[gray,thick] (0.2,0) -- (0.1,0.1/2^\i);
\draw[gray,thick] (0.1,0) -- (0.05,0.05/2^\i);
\draw[gray,thick] (0.05,0) -- (0.025,0.025/2^\i);
}
\draw[gray,thick] (0,0) -- (6.4,0);
\fill[black] (6.4,0) node[below]{$1$} circle (0.3ex);
\fill[black] (3.2,0) node[below]{$\frac12$} circle (0.3ex);
\fill[black] (1.6,0) node[below]{$\frac14$} circle (0.3ex);
\fill[black] (0.8,0) node[below]{$\frac18$} circle (0.3ex);
\fill[black] (0.4,0)  circle (0.3ex);
\fill[black] (0,0) node[below]{$0$} circle (0.3ex);
\end{tikzpicture}
\end{center}
\vskip -0.618cm
\caption{A finite approximation of the Witch's Broom.}\label{broom}
\vskip -0.25cm
\end{figure}
\end{example}

{\bf Acknowledgements}.
The first and second named authors are respectively supported by the Chinese National Natural Science Foundation Projects 11871483 and 11901011. The second author  is also supported by Science and Technology Projects of Guangzhou 202102020480 and Guangdong Basic and Applied Basic Research Foundation 2021A1515010242.
 The authors want to thank anonymous referees for suggestions that help to simplify some results and their proofs.
During private communications, the authors have received  suggestions from Alexander Blokh and Lex Oversteegen, concerning how to associate to any rational function $f$ with Julia set $K$ a factor system $\widehat{f}: \widehat{\Dc}_K\rightarrow\widehat{\Dc}_K$, where $\widehat{\Dc}_K$ under the quotient topology is a cactoid.

\bibliographystyle{plain}

\end{document}